\newtheorem{thm}{Theorem}[section]
\newtheorem{lem}[thm]{Lemma}
\newtheorem{cor}[thm]{Corollary}
\newtheorem{rmk}[thm]{Remark}
\newtheorem{ex}[thm]{Example}
\newcommand{\be}{\begin{eqnarray}}
\newcommand{\ee}{\end{eqnarray}}
\newcommand{\beal}{\begin{aligned}}
\newcommand{\enal}{\end{aligned}}
\newcommand{\eps}{\varepsilon}
\newcommand{\T}{\mathbb{T}}
\newcommand{\R}{\mathbb{R}}
\newcommand{\Z}{\mathbb{Z}}
\newcommand{\om}{\omega}
\newcommand{\cO}{\mathcal{O}}
	\def\textr{\textcolor{red}}
\title{Suspension of the Billiard maps in the Lazutkin's coordinate}
\author{Jianlu Zhang}
\address{Department of Mathematics, University of Toronto \\Ontario, Canada, m5s2n1}
\email{jianlu.zhang@utoronto.ca}
\thanks{}
\subjclass{Primary 37J50; Secondary 70H08}
\keywords{billiard maps, Hamiltonian system, Aubry Mather theory, symplectic suspension}
\date{}
\begin{document}
\maketitle
\begin{abstract}
In this paper we proved that under the Lazutkin's coordinate, the billiard map can be interpolated by a time-1 flow of a Hamiltonian $H(x,p,t)$ which can be formally expressed by
\[
H(x,p,t)=p^{3/2}+p^{5/2}V(x,p^{1/2},t),\quad(x,p,t)\in\T\times[0,+\infty)\times\T,
\]
where $V(\cdot,\cdot,\cdot)$ is $C^{r-5}$ smooth if the convex billiard boundary is $C^r$ smooth. We also show several applications of this suspension in exploring the properties of the billiard maps.
\end{abstract}

\section{Introduction}
Let's concern the following reflective mechanism: For a convex domain $\Omega$ of $\R^2$ with a sufficiently smooth boundary $\partial\Omega$, there exists a inner partical which makes uniform motion in straight line. When it hits $\partial\Omega$, the impact angle is the same as the reflected angle (see figure \ref{fig1}). The {\bf Billiard map} is just defined by the correspondence of two adjacent reflective points, i.e. $\phi:P_0\rightarrow P_1$.
\begin{figure}
\begin{center}
\includegraphics[width=10cm]{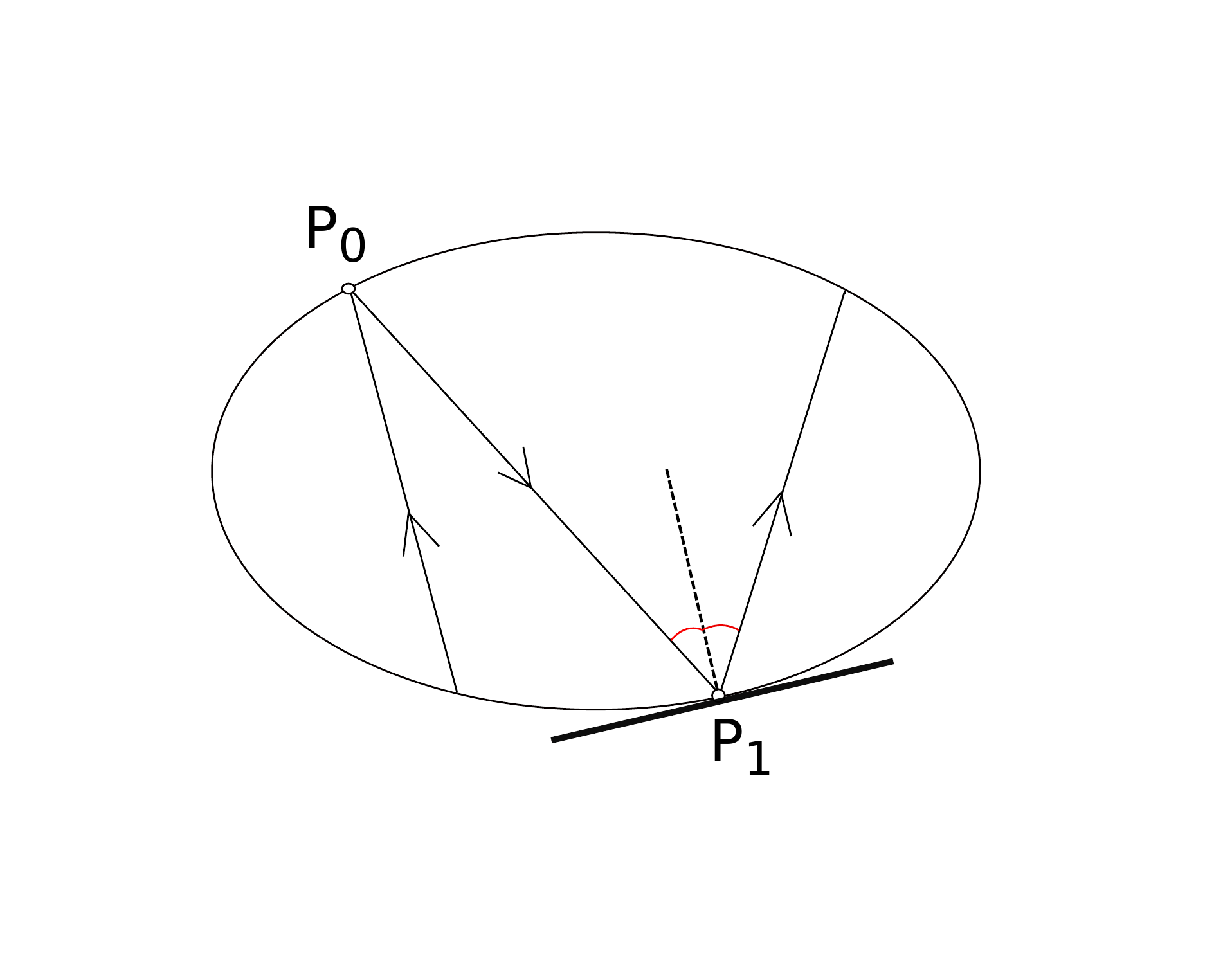}
\caption{ }
\label{fig1}
\end{center}
\end{figure}
According to this mechanism, if there exists a closed convex curve $\Gamma$ lying in $\Omega$, such that any tangent of $\Gamma$ remains a tangent line after once reflection, then we call it a {\bf caustic} (see figure \ref{fig2}). If we formalize the arc length of $\partial\Omega$ by $1$, then each reflective point can be fixed by the coordinate $(s,v)$, and
\be
\phi:(s,v)\rightarrow(s_1,v_1)
\ee
becomes a map on the closed annulus $\mathbb{A}=\{(s,v)|(s,v)\in\T\times[0,\pi]\}$. Here we take $s$ by the arc-parameter and $v$ by the reflected angle. Obviously $\phi$ leaves the boundary of $\mathbb{A}$, $\partial\mathbb{A}:= \{t = 0\} \bigcup\{t = \pi\}$ invariant. Moreover, the billiard map actually belongs to the type of {\bf exact monotone twist maps}, this is because we can find a generating function by 
\be
h(s,s^+):=-d(P_0,P_1), \quad (s,s^+)\in\T^2, 
\ee
where $d(\cdot,\cdot)$ is the Euclid distance of $\R^2$, and
\be
\partial_1h=\cos v,\quad \partial_2h=-\cos v^+.
\ee
The twist property is implied by $-\partial_{12}h>0$ once the boundary is strictly convex \cite{M2}. With this variational approach, a caustic will correspond to an invariant curve in $\mathbb A$. J. Mather gave a short and smart proof on the nonexistence of caustics:
\begin{thm}\cite{Ma}
 If $\partial\Omega$ has a flat point, that is if there is a point, where the curvature vanishes, then the billiard map $\phi$ has no invariant curve.
\end{thm}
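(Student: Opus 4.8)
The plan is to combine Birkhoff's graph theorem with the variational (Aubry--Mather) characterization of the orbits carried by an invariant curve, and then to extract a purely \emph{local} contradiction at the flat point from the second--order minimality condition. The guiding observation is that the boundary curvature $\kappa$ enters only the diagonal second derivatives of the generating function $h$, so that forcing $\kappa=0$ destroys the positivity that minimality requires.

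First I would recall that, since $\phi$ is an exact monotone twist map, any essential invariant curve $\Gamma\subset\mathbb{A}$ is the Lipschitz graph of a function $v=\gamma(s)$ over $\T$ (Birkhoff), and I would reduce to the case $0<\gamma<\pi$, i.e.\ $\Gamma$ lies in the open annulus, the two boundary circles being the trivial glancing curves. The dynamics on $\Gamma$ is then an orientation--preserving circle homeomorphism, so the orbits carried by $\Gamma$ are ordered; by Aubry's lemma such ordered orbits are \emph{minimal} for the action generated by $h$. In particular, for the orbit $\{s_i\}$ through the flat point, say $s_0=s^{\ast}$ with $\kappa(s^{\ast})=0$, every finite segment minimizes $\sum_i h(s_i,s_{i+1})$ with its endpoints fixed. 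Minimizing the two--link segment over the middle point $s_0$ alone yields the first--order condition $\partial_2h(s_{-1},s_0)+\partial_1h(s_0,s_1)=0$ (the reflection law) together with the second--order inequality
\[
\partial_{22}h(s_{-1},s_0)+\partial_{11}h(s_0,s_1)\ \ge\ 0 .
\]

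The heart of the argument is a direct computation of these second derivatives of $h(s,s^+)=-\ell$, where $\ell$ is the chord length. Writing $u$ for the unit chord vector, $T,N$ for the tangent and inner normal, and using $T'=\kappa N$, I would obtain
\[
\partial_{11}h=-\frac{\sin^2 v}{\ell}+\kappa(s)\sin v,\qquad \partial_{22}h=-\frac{\sin^2 v^+}{\ell}+\kappa(s^+)\sin v^+,
\]
while the off--diagonal term merely reproduces the twist $-\partial_{12}h=\dfrac{\sin v\,\sin v^+}{\ell}>0$. Evaluating at $s_0=s^{\ast}$ and using the reflection law $v^{\mathrm{in}}_0=v^{\mathrm{out}}_0=v_0$ gives
\[
\partial_{22}h(s_{-1},s_0)+\partial_{11}h(s_0,s_1)=2\kappa(s^{\ast})\sin v_0-\sin^2 v_0\Big(\tfrac{1}{\ell_{-1,0}}+\tfrac{1}{\ell_{0,1}}\Big).
\]

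Since $\kappa(s^{\ast})=0$ and $0<v_0<\pi$, the right--hand side is strictly negative, contradicting the minimality inequality above, which proves the theorem. The main obstacle, and the step I would be most careful about, is the justification of minimality: I must invoke Birkhoff's theorem to know $\Gamma$ is a graph, Aubry's ordering/minimality result to upgrade ``orbit on a graph'' to ``minimal configuration,'' and I must rule out the degenerate case $\gamma(s^{\ast})\in\{0,\pi\}$ (where $\sin v_0=0$ kills the estimate); the latter is handled by noting that the glancing boundary circles are invariant, so a graph meeting them along an orbit would have to coincide with them on that whole orbit, pushing $\Gamma$ into the boundary. The computation of $\partial_{11}h$ and $\partial_{22}h$ is routine once the geometry is set up; the conceptual weight lies entirely in the reduction to the local second--variation inequality.
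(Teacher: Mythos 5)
First, a caveat about the comparison: the paper itself gives \emph{no} proof of this statement --- it is quoted directly from Mather's article \cite{Ma} --- so the only meaningful benchmark is Mather's original argument, and your proposal is essentially a reconstruction of it. The skeleton (Birkhoff's graph theorem, reduction to the second--order necessary condition $\partial_{22}h(s_{-1},s_0)+\partial_{11}h(s_0,s_1)\ge 0$ at the reflection at the flat point, and the formulas $\partial_{11}h=-\sin^2 v/\ell+\kappa(s)\sin v$, $\partial_{22}h=-\sin^2 v^{+}/\ell+\kappa(s^{+})\sin v^{+}$, $-\partial_{12}h=\sin v\sin v^{+}/\ell$, hence the contradiction $2\kappa(s^{\ast})\sin v_0-\sin^2 v_0\,(1/\ell_{-1,0}+1/\ell_{0,1})<0$) is exactly the known mechanism, and your second--derivative computations are correct.

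There is, however, a genuine gap at precisely the step you identify as carrying the conceptual weight. ``Ordered $\Rightarrow$ minimal'' is not Aubry's lemma and is false: Aubry's lemma goes the other way (minimal configurations cross at most once). A concrete counterexample is the constant configuration sitting at a maximum of the potential of a standard--type twist map: it is trivially ordered (it is a fixed point), yet its formal Hessian is the discrete Laplacian plus a negative multiple of the identity, so it is not even locally minimizing; likewise the min--max Birkhoff periodic orbits from the Poincar\'e--Birkhoff theorem are cyclically ordered but not minimal. What your argument needs, and what is true, is that every orbit on an \emph{essential invariant circle which is a Lipschitz graph} is minimal; the proof uses not the orderedness of the single orbit but the whole circle, whose orbits form a field of pairwise non--crossing stationary configurations covering $\mathbb{R}$, to which a discrete Weierstrass/calibration argument applies (see Bangert's survey on twist maps, or Gol\'e's book \cite{G}). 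Alternatively, you can bypass minimality entirely: invariance of the graph gives the identity $\partial_2 h(f^{-1}(s),s)+\partial_1 h(s,f(s))\equiv 0$ for the induced circle homeomorphism $f$; taking difference quotients of this identity at $s^{\ast}$ (legitimate, since $f$ is monotone and bi--Lipschitz, so no differentiability is needed) and using the twist condition $-\partial_{12}h>0$ yields exactly $\partial_{22}h(f^{-1}(s^{\ast}),s^{\ast})+\partial_{11}h(s^{\ast},f(s^{\ast}))\ge 0$, which your curvature computation then contradicts. Finally, your dismissal of the degenerate case $\gamma(s^{\ast})\in\{0,\pi\}$ does not work as written: points of $\{v=0\}$ are fixed by $\phi$, so ``coinciding with the boundary along the whole orbit'' is an empty constraint there; the clean fix is that an invariant curve in Mather's theorem means an essential invariant curve in the open annulus, which, being a continuous graph over $\mathbb{T}$, is automatically bounded away from $v=0$ and $v=\pi$.
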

\begin{figure}
\begin{center}
\includegraphics[width=10cm]{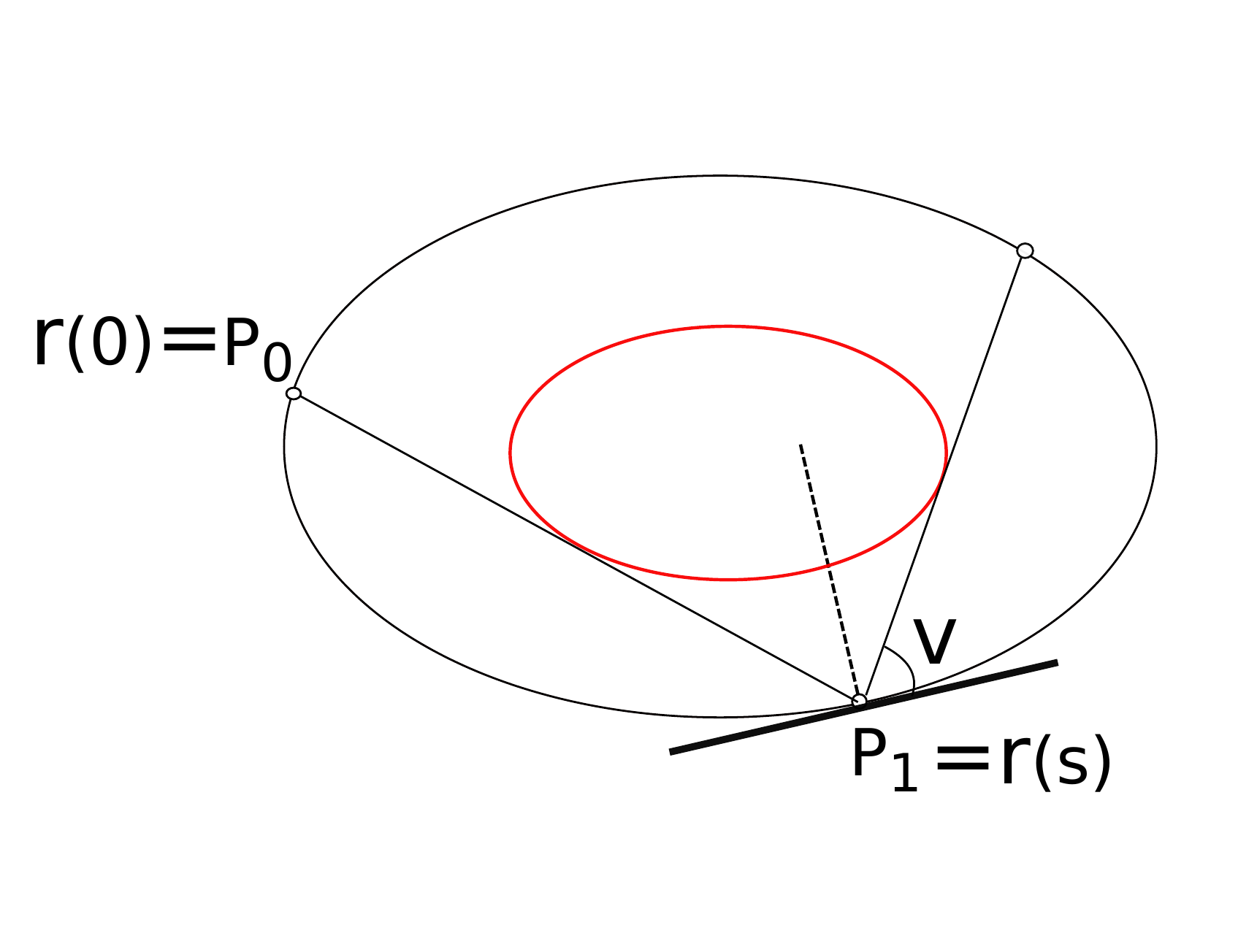}
\caption{ }
\label{fig2}
\end{center}
\end{figure}
On the other side, Lazutkin succeeded in applying the averaging method to prove the existence of plenty of caustics which are close to the boundary $\partial\Omega$:
\begin{thm}\cite{L}
$\forall$ small $0<a<1$, fixed constant $\sigma\geq3$ and $A>0$, we denote by $E(a)$ the set of all the $(A,\sigma)-$Diophantine number in $[0,a]$, i.e. $\forall \eta\in E(a)$, there exists a constant $A$, such that
\[
|n\eta-m|\geq A|m|n^{0.5-\sigma},\quad\forall m\in\Z, n\in\Z_+.
\]
For $C^r$ smooth boundary $\partial\Omega$ with $r\geq553$, there exists a constant $b^*\in(0,a)$ depending only on $\|\partial\Omega\|_{C^r}$, $\sigma$ and $A$, such that all the quasi-periodic curves persist with the frequency in $E(a)\bigcap[0,b^*]$. 
\end{thm}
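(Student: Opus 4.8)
The plan is to recast the near-boundary billiard dynamics as a small, high-order perturbation of an integrable twist map and then to invoke a quantitative KAM theorem to continue precisely those rotational invariant curves whose rotation number satisfies the Diophantine condition. First I would pass to Lazutkin's coordinates $(x,y)$, in which $x\in\T$ is defined through $dx=c_0\,\rho(s)^{-2/3}\,ds$ (with $\rho$ the radius of curvature and $c_0$ a normalizing constant making $x$ a circle coordinate) and $y$ is a suitable rescaling of the reflection angle $v$. In these variables the billiard map near the boundary $\{y=0\}$ takes the normal form
\[
x_1=x+y+y^2f(x,y),\qquad y_1=y+y^3g(x,y),
\]
so that it is $C^{r-k}$-close (with a fixed loss $k$) to the integrable twist $(x,y)\mapsto(x+y,y)$, the perturbation vanishing to high order as $y\to 0$. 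Equivalently, one reads this off the suspension Hamiltonian $H(x,p,t)=p^{3/2}+p^{5/2}V(x,p^{1/2},t)$: the integrable part $p^{3/2}$ generates a flow of frequency $\tfrac32 p^{1/2}$, which is small for $p$ near $0$, while $p^{5/2}V$ is a genuinely higher-order correction. The glancing trajectories near $\partial\Omega$ thus carry small rotation number, which is why the persistent frequencies lie in $[0,b^*]$.

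Second I would verify the two structural hypotheses that KAM requires. The twist condition $\partial_y x_1=1+O(y)\neq 0$ holds on a full neighborhood of $y=0$, and exactness (area preservation) is inherited from the generating function $h(s,s^+)=-d(P_0,P_1)$ already exhibited in the excerpt, so the map remains an exact monotone twist map in the new coordinates. Rescaling $y=\eps\tilde y$ with $\eps$ small converts the map into an $\eps$-perturbation of the standard twist on a fixed annulus, so the effective perturbation parameter is exactly the distance to the boundary, controlled by $b^*$.

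Third, with the twist map written as an $\eps$-perturbation and the target frequency $\eta\in E(a)\cap[0,b^*]$ satisfying $|n\eta-m|\geq A|m|n^{0.5-\sg}$, I would run Moser's twist theorem (or a Rüssmann-type sharpening) to produce, for each such $\eta$, an invariant circle on which the dynamics is conjugate to rotation by $\eta$. Each KAM step reduces to solving a linearized cohomological equation $u(x+\eta)-u(x)=(\text{known})$, whose solvability and quantitative estimates are governed precisely by the Diophantine inequality; the quadratically convergent Newton scheme is then closed by interpolating smoothing operators between derivative scales.

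The hard part will be the bookkeeping of the KAM iteration in the finitely-differentiable category: each step loses a fixed number of derivatives through the small-divisor estimate and the smoothing, and these losses must be summed over infinitely many steps while maintaining geometric convergence of the conjugacies. The explicit threshold $r\geq 553$ is exactly the outcome of this accounting — the number of derivatives of $\partial\Omega$ one must spend, given the exponent $\sg$, to guarantee convergence of the Newton iteration; and ensuring that the single constant $b^*$ depending only on $\|\partial\Omega\|_{C^r}$, $\sg$ and $A$ works uniformly over the entire Diophantine family $E(a)$ is the quantitative crux of the argument.
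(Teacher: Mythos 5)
Your proposal follows the same route that the paper itself indicates: this theorem is not proved in the paper at all --- it is quoted from Lazutkin \cite{L}, and the text immediately after the statement only outlines the strategy, namely the near-boundary normal form (\ref{eq1}), the exact symplectic change to Lazutkin coordinates producing the nearly integrable map (\ref{eq}), and then ``several steps KAM iterations.'' Your sketch (Lazutkin coordinates, twist and exactness verification, Moser/R\"ussmann-type KAM in finite smoothness with derivative-loss bookkeeping) is exactly this strategy, so in that sense it is the same approach and a reasonable blind reconstruction.

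One point deserves correction, because it is not merely cosmetic. You wrote the normal form as $x_1=x+y+y^2f(x,y)$, $y_1=y+y^3g(x,y)$, while the correct Lazutkin normal form --- as in (\ref{eq1}) and (\ref{eq}) of the paper --- has the perturbation vanishing one order higher: $x^+=x+y+y^3f(x,y)$, $y^+=y+y^4g(x,y)$. This extra order of vanishing is not a luxury: the frequencies to be preserved accumulate at $0$, so after restricting to an annulus $\{y\sim\delta\}$ and rescaling, the effective twist is only of size $\delta$ while the perturbation must beat it by a sufficient power of $\delta$ for the Newton scheme to close; this is also why the Diophantine condition carries the factor $|m|$ and the exponent $0.5-\sigma$ rather than being a standard constant-type condition, and why your step ``run Moser's twist theorem on a fixed annulus'' cannot be invoked off the shelf --- the smallness of the twist and the degeneration of the small-divisor constants as $\eta\to 0$ must be tracked simultaneously, which is precisely the uniformity issue you correctly identify as the crux but do not resolve. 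With the weaker orders $y^2,y^3$ the scheme as you set it up would not yield curves for all frequencies in $E(a)\cap[0,b^*]$.
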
 
\begin{rmk}
Recall that $E(a)$ is a sparse set in $[0,a]$ but has a positive Lebesgue measure:
\[
mes(E(a))\geq a-c_1 a^{\sigma-0.5},
\]
where $c_1$ depends only on $\sigma$ and $A$, and as $\sigma\rightarrow\infty$, $c_1\rightarrow0$.

In another paper \cite{D}, Douady improved the smoothness of boundary by $r\geq 6$.
\end{rmk}
Lazutkin's prove is due to the following procedure: for sufficiently small reflected angle $v$, the billiard map can be expressed by
\begin{eqnarray}\label{eq1}
s^+&=&s+\alpha_1(s)v+\alpha_2(s)v^2+\alpha_3(s)v^3+F(s,v)v^4\nonumber\\
v^+&=&v+\beta_2(s)v^2+\beta_3(s)v^3+G(s,v)v^4,
\end{eqnarray}
where
\be
\alpha_1(s)=2\rho(s),\;\alpha_2(s)=\frac{4}{3}\rho(s)\rho'(s),\nonumber\\
\alpha_3(s)=\frac{2}{3}\rho^2\rho''+\frac{4}{9}\rho\rho'^2,\nonumber\\
\beta_2(s)=-\frac{2}{3}\rho',\;\beta_3(s)=-\frac{2}{3}\rho\rho''+\frac{4}{9}\rho'^2,\nonumber
\ee
with $\rho(s)$ being the curvature of $\partial\Omega$ and $s$ being the arc parameter. Then he found an exact symplectic transformation by 
\begin{eqnarray}
x &=&\frac{\int_0^s\rho(\tau)^{-2/3}d\tau}{\int_0^1\rho(s)^{-2/3}ds},\nonumber\\
y &=&\frac{4\rho(s)^{1/3}\sin v/2}{\int_0^1\rho(s)^{-2/3}ds},
\end{eqnarray}
and then (\ref{eq1}) becomes
\begin{eqnarray}
x^+&=&x+y+y^3f(x,y),\nonumber\\
y^+&=&y+y^4g(x,y),
\end{eqnarray}
with the invariant symplectic form by $dx\wedge d{y^2}/{2}$. If we take $l=y^2/2$, then
\begin{eqnarray}\label{eq}
x^+&=&x+\sqrt{2}l^{1/2}+2\sqrt{2}l^{3/2}f(x,\sqrt{2l}),\\
l^+&=&l+4\sqrt{2}l^{5/2}g(x,\sqrt{2l})\cdot[1+\sqrt{2}l^{3/2}g(x,\sqrt{2l})]\nonumber
\end{eqnarray}
is exactly standard symplectic with the generating function 
\begin{equation}\label{generating}
\tilde{h}(x,x^+)=4C_1^2\int_x^{x^+}\rho^{2/3}(s(\tau))d\tau+4C_1^3h(s,s^+),\quad C_1=\Big{(}\int_0^1\rho^{-2/3}(s)ds\Big{)}^{-1}
\end{equation}
which satisfies 
\[
-\partial_1\tilde{h}(x,x^+)=l,\quad\partial_2\tilde{h}(x,x^+)=l^+.
\]
Formally we can find (\ref{eq}) becomes a nearly integrable map for small $l>0$, that's why several steps KAM iterations can be applied and he can get the persistence of positive measure caustics.\\

%Another benefit is the Lazutkin's map (\ref{eq}) is strictly twist for small $0<l\ll1$, even if $\partial\Omega$ has flat points.
As now the Lazutkin's coordinate proves to be a powerful tool in exploring the properties of the billiard maps in the nearing boundary domain, we can naturally make a comparison with the convex nearly integrable Hamiltonian flow. That's because the latter is more flexible and developed in practical applications, e.g. we can get resonant normal forms by the KAM iterations (related with the existence of resonant cuastics), or use the Aubry Mather theory or weak KAM theorem get the properties for all kinds of invariant set. This motivation urges us to prove the following conclusion: 
\begin{thm}[Main Conclusion]
There exists a smooth Hamiltonian $H(x,l,t)$ defined for $(x,l,t)\in\T\times[0,\infty)\times\T$, of which the billard map (\ref{eq}) can be interpolated by the time-1 flow $\phi_H^1$ for $0\leq l\ll1$. Moreover, $H(x,l,t)$ can be formally expressed by
\begin{equation}\label{formal}
H(x,l,t)=\frac{2\sqrt{2}}{3}l^{3/2}+l^{5/2}V(x,\sqrt{l}, t),\quad(x,l,t)\in\T\times[0,+\infty)\times\T.
\end{equation}
\end{thm}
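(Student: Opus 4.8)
The plan is to treat the leading cubic term as an exactly integrable background and the rest of the billiard map as a genuinely small perturbation, after passing to the variable in which everything is smooth. First I would observe that $H_0(l):=\frac{2\sqrt2}{3}l^{3/2}$ already reproduces the integrable part of (\ref{eq}): since $\partial_l H_0=\sqrt{2}\,l^{1/2}=\sqrt{2l}$ and $\partial_x H_0=0$, the time-$1$ flow $\phi_{H_0}^1$ sends $(x,l)\mapsto(x+\sqrt{2l},\,l)$, matching the principal terms of (\ref{eq}). Because the fractional powers make $l=0$ look singular while the map is in fact $C^\infty$ in $y=\sqrt{2l}$ (equivalently in $\mu:=\sqrt l$), I would carry out the whole construction in the smooth variable $\mu$, where $H_0=\frac{2\sqrt2}{3}\mu^{3}$ and the perturbation is a smooth function of $(x,\mu,t)$; the $V(x,\sqrt l,t)$ of (\ref{formal}) is exactly the reflection of this. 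The organizing principle is the scaling $(x,l,t)\mapsto(x,\lambda^2 l,\lambda^{-1}t)$, which leaves the $H_0$-flow invariant and turns $\mu$ into the natural small parameter of the expansion.

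Next I would reformulate $\phi_H^1=\phi$ as an interpolation problem for generating functions. Writing $S^t(x,X)$ for the type-$1$ generating function of the time-$t$ flow $\phi_H^t$, the Hamilton--Jacobi identity $\partial_t S^t(x,X)=-H\big(X,\partial_X S^t(x,X),t\big)$ characterizes $H$ once the family $S^t$ is known, subject to $S^t(x,X)\sim\frac{(X-x)^3}{6t^2}$ as $t\to0^+$ (the integrable twist singularity produced by $H_0$) and $S^1=\tilde h$, with $\tilde h$ the billiard generating function (\ref{generating}). I would construct an admissible homotopy $S^t$ joining these two data — most naturally by applying the scaling above to $\tilde h$ and correcting it so that $H$ becomes $C^\infty$ and $1$-periodic in $t\in\T$ — and then read $H$ off from the Hamilton--Jacobi identity. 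Solving for $H_1:=H-H_0$ reduces, order by order in $\mu$, to linear transport equations along the $H_0$-flow; these are solvable by integration over one period, and crucially no small divisors appear, since this is a suspension (an ODE integration) rather than a conjugacy to a rotation.

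The order counting is what forces the shape (\ref{formal}). The first correction to the $x$-equation in (\ref{eq}) is $O(l^{3/2})$ and must be produced by $\partial_l H_1$, while the correction to the $l$-equation is $O(l^{5/2})$ and must be produced by $-\partial_x H_1$; both constraints are met precisely when $H_1=l^{5/2}V(x,\sqrt l,t)$, since $\partial_l(l^{5/2}V)=O(l^{3/2})$ and $\partial_x(l^{5/2}V)=O(l^{5/2})$. Thus the factor $l^{5/2}$ is not imposed but emerges from matching the two twist orders of the map, and the $\mu$-expansion determines $V$ recursively. Because solving the transport equations and differentiating $\tilde h$ each cost a fixed number of derivatives, and $\tilde h$ is only as smooth as the curvature permits (boundary $C^r$ gives $\rho\in C^{r-2}$, hence $\tilde h$ a few orders lower), the resulting $V$ lands in $C^{r-5}$, as claimed.

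Finally, I would upgrade the formal, order-by-order solution to an exact one. The delicate point is that the $\mu$-series for $V$ need not converge, so I would realize the suspension by a fixed-point (implicit-function) argument in a Banach space of Hamiltonians of the form $H_0+l^{5/2}V$: after the $\mu$-rescaling, $(\phi_{H_0}^1)^{-1}\circ\phi$ is a near-identity exact symplectic map in the $C^r$ topology, uniformly up to $l=0$, so the interpolation operator is a contraction and yields a genuine smooth $H$ with $\phi_H^1=\phi$ on $0\le l\ll1$. I expect the main obstacle to be exactly this last step in the presence of the boundary $l=0$: one must keep all estimates uniform as $l\to0$, which is only possible in the smooth-in-$\sqrt l$ framework and requires the scaling structure to balance the blow-up of the twist $\partial_l x^+\sim l^{-1/2}$ against the smallness of the perturbation, while simultaneously enforcing smooth $1$-periodicity in $t$ without destroying the clean form (\ref{formal}).
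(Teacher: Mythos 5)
Your heuristic skeleton matches the truth: the integrable part $H_0(l)=\frac{2\sqrt2}{3}l^{3/2}$, the order-counting that forces the correction to be $l^{5/2}V(x,\sqrt l,t)$, and the insistence on working in the variable $\sqrt l$ are all consistent with the paper. But the two steps that actually carry the proof are asserted rather than proved, and both are precisely where the difficulty lives. First, periodicity in $t$: your scaling homotopy (conjugating $\phi$ by $(x,l)\mapsto(x,t^2l)$, equivalently $S^t=t^{-2}\tilde h$) does produce an isotopy from the identity to $\phi$ with a well-behaved Hamiltonian, but that Hamiltonian is \emph{not} $1$-periodic ($H(\cdot,\cdot,1)\neq H(\cdot,\cdot,0)$), and ``correcting it so that $H$ becomes $C^\infty$ and $1$-periodic'' is a single clause in your write-up. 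In the paper this correction is the bulk of the argument: one composes $\phi$ with integrable maps $\psi^{-\kappa}$ on both sides, suspends the middle factor, obtains a periodic but discontinuous-in-$t$ Hamiltonian $\hat H$, mollifies in time, and then glues the two generating functions $\hat w$ and $w^*$ with a cutoff $\xi(t)$. The glueing is not free of charge: the cutoff contributes a term $\xi'(t)(w^*_{ll}-\hat w_{ll})$, with $\xi'\sim\kappa^{-1}$, to the convexity of the reconstructed Hamiltonian, and one must prove the quantitative estimate that the kinetic term's $-1/\sqrt{2l}$ dominates it (this is the inequality the paper checks, and it fixes the admissible relations among $\kappa$, $t_2-t_1$, $\eps$ and $l$). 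Without such an estimate, a glued generating family need not define a single smooth, fiberwise convex Hamiltonian at all, and the form (\ref{formal}) could be destroyed.

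Second, your existence argument is circular at the key point. The claim that $(\phi_{H_0}^1)^{-1}\circ\phi$ is near-identity ``in the $C^r$ topology, uniformly up to $l=0$,'' so that ``the interpolation operator is a contraction,'' does not hold in any framework you have set up. In the symplectic coordinates $(x,l)$ the deviation from the identity is $O(l^{3/2})$ in $C^0$ but its second $l$-derivatives blow up like $l^{-1/2}$, so there is no uniform $C^2$ smallness — this is exactly the obstruction the paper points to when explaining why Moser's suspension theorem for monotone twist maps does not apply (the twist degenerates and the $C^2$ norm of the kinetic part blows up as $l\to0$). Passing to $\mu=\sqrt l$ restores smoothness of the maps but degenerates the symplectic form to $2\mu\,dx\wedge d\mu$, so no off-the-shelf implicit-function/contraction statement for exact symplectic maps is available there either; one would have to build that functional-analytic framework from scratch, with estimates uniform at the degenerate boundary. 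You candidly flag this step as ``the main obstacle,'' and it is: as written, the proposal proves the formal shape (\ref{formal}) but not the existence of the smooth periodic suspension, which is the actual content of the theorem. The paper avoids the issue entirely by never invoking an abstract fixed point: it constructs the interpolating Lagrangian explicitly, $L(x,v,t)=-\int_0^v(v-\eta)\,\partial_{12}h(x-\eta t,x+\eta(1-t))\,d\eta$, so that all the boundary degeneracies are handled by explicit formulas and elementary estimates rather than by a topology in which smallness would have to be justified.
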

We should confess that J. Moser is the first mathematician making a connection between monotones twist maps and time-1 periodic flow of Hamiltonians, and in \cite{M} he gave a precise prove for how to suspend  general monotone twist maps by time-periodic Hamiltonians. But (\ref{eq}) is just of the type excluded by his paper, because from (\ref{formal}) you can find a singular kinetic part $\frac{2\sqrt{2}}{3}l^{3/2}$, with the $\|\|_{C^2}$ norm blowing up as $l\rightarrow0$. So we need to cover this case by preciser quantitative evaluation.

The heuristic idea of the proof is the following: first, suspend (\ref{eq}) by a straight flow Hamiltonian, which is unnecessarily to be time periodic; then we can evaluate the kinetic part and perturbation part, and strictly separate they two under the $\|\cdot\|_{C^2}$ norm; at last, due to the evaluation we slightly modify the Hamiltonian to be time periodic (see Section \ref{2} for more details).\\

Due to the Legendre transformation, we can get the conjugated Lagrangian of (\ref{formal}) by
\be\label{formal2}
L(x,v,t)=\frac{v^3}{6}+v^5U(x,v,t),\quad (x,v,t)\in\T\times[0,+\infty)\times\T,
\ee
which conforms to the basic setting of \cite{Ma2}. So we can use the variational approach of (\ref{formal2}) to prove the following application:
\begin{cor}\label{cor}
Any two adjacent caustics can be connected by one special billiard reflection plan; in other words, for any two adjacent invariant curves $\Gamma_0$, $\Gamma_1$ of (\ref{formal}) and any two small open neighborhoods $U_0$, $U_1$ of each, there exists one trajectory $\phi_H^t(x^*,l^*)$ passing by these two neighborhoods in turns.
%, of which $\Gamma_0$ is the $\alpha-$limit set (as $t\rightarrow-\infty$) and $\Gamma_1$ is the $\omega-$limit set (as $t\rightarrow+\infty$).
\end{cor}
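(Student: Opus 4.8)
The plan is to transfer the problem to the suspended flow and then invoke Mather's variational theory of connecting orbits \cite{Ma2}. By the Main Conclusion the billiard map (\ref{eq}) is the time-$1$ map of the Euler--Lagrange flow of the time-periodic Lagrangian $L(x,v,t)=v^3/6+v^5U(x,v,t)$ on $\T\times[0,\infty)\times\T$. Under this correspondence an invariant curve $\Gamma_i$ of (\ref{eq}) lifts to an invariant Lipschitz torus of $\phi_H^t$, i.e. to an Aubry--Mather set of $L$ with a well-defined rotation number $\omega_i$; relabel so that $\omega_0<\omega_1$. The hypothesis that $\Gamma_0,\Gamma_1$ are \emph{adjacent} means that no invariant curve of (\ref{eq}) lies between them, so the closed strip $R$ they bound is a Birkhoff region of instability. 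The corollary then amounts to the existence of a trajectory of $\phi_H^t$ that crosses $R$, visiting the prescribed neighborhoods $U_0,U_1$ of the two boundary tori.

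Before applying \cite{Ma2} one must repair two defects of $L$: it is defined only for $v\geq0$, and its fiberwise convexity $\partial_{vv}L=v+O(v^3)$ degenerates at $v=0$, so $L$ is not a Tonelli Lagrangian on all of $\T\times\R\times\T$. I would localize: since $\omega_0>0$ the strip $R$ is contained in a region $\{\,\delta\le v\le\Delta\,\}$ bounded away from the degenerate boundary, and there $L$ is uniformly convex and, being smooth on a compact fiber, satisfies the Tonelli conditions. I then modify $L$ outside a small neighborhood of $R$ to obtain a globally defined, time-periodic, uniformly convex and superlinear Lagrangian $\tilde{L}$ on $\T\times\R\times\T$ coinciding with $L$ near $R$. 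Because $\Gamma_0$ and $\Gamma_1$ are Lipschitz graphs that a minimizing orbit confined to $R$ cannot cross (Birkhoff's graph property), the connecting orbits produced for $\tilde{L}$ stay inside $R$ and are therefore genuine orbits of the original $L$, hence of the billiard map.

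With $\tilde{L}$ a bona fide Tonelli Lagrangian whose Aubry sets at rotation numbers $\omega_0$ and $\omega_1$ bound a region containing no invariant circle, Mather's connecting orbit theorem \cite{Ma2} applies directly: for any neighborhoods $U_0,U_1$ of the two Aubry--Mather sets there is a trajectory that enters $U_0$ and $U_1$ in turn (more generally, one may prescribe any bi-infinite sequence of rotation numbers in $[\omega_0,\omega_1]$ and obtain an orbit shadowing the corresponding Mather sets). Projecting this trajectory by the time-$1$ map yields the asserted billiard orbit $\phi_H^t(x^*,l^*)$ joining the two caustics.

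I expect the main obstacle to be the two-step reduction rather than the application of \cite{Ma2} itself: namely, proving the a priori confinement of the variational minimizers to the compact strip $R$ (so that the Tonelli modification is dynamically invisible) and checking that the degeneracy at $v=0$ together with the one-sided domain does not interfere once the rotation numbers are bounded below. Establishing this graph/confinement property in the degenerate-at-the-boundary setting, and verifying that the suspended Lagrangian inherits on $R$ the uniform convexity required by Mather's minimal-measure and barrier-function machinery, is where the quantitative estimates from the Main Conclusion would be needed.
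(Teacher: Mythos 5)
Your reduction steps are sound and largely parallel the paper's own setup: the suspension from the Main Conclusion, the observation that adjacency makes the strip $R$ a Birkhoff region of instability, the localization away from the degenerate set $\{v=0\}$ (the paper instead extends $L$ symmetrically in $|v|$ and keeps the degeneracy, which it confines to a remark about $\beta$ at $h=0$), and the confinement of orbits to $R$ (which is in fact immediate: the two invariant tori separate the extended phase space, so $R$ is invariant under any flow agreeing with $\phi_H^t$ near $R$). The genuine gap is the engine you invoke at the end: there is no connecting orbit theorem in \cite{Ma2} --- that paper constructs minimizing measures, the $\alpha$ and $\beta$ functions and the graph theorem, and contains no connecting or shadowing orbits at all. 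The theorem you describe (bi-infinite shadowing of Aubry--Mather sets with prescribed rotation numbers in a region of instability) is Mather's \cite{Ma3}, and it is proved there for exact area-preserving \emph{monotone twist diffeomorphisms} of the annulus, not for time-periodic Tonelli Lagrangians. Your modification produces a Tonelli Lagrangian $\tilde{L}$, and the time-$1$ map of a Tonelli flow is in general not a twist map, so \cite{Ma3} does not ``apply directly'' to it; in the Lagrangian category the analogous statement is not an off-the-shelf theorem but precisely what has to be proved.

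That missing content is exactly what the paper's Section \ref{3} supplies: (i) Lemma \ref{lem}, that for every intermediate rotation number $w\in(w_0,w_1)$ and $c\in D^+\beta(w)$ the projected Ma\~{n}\'e set satisfies $\mathcal{N}(c)\bigcap\{t=0\}\varsubsetneqq\T$ --- this is where adjacency enters, via the graph property of Aubry sets; (ii) the resulting $C$-equivalence of all classes in the interval $\Upsilon=(c_-,c_+)$; and (iii) the Bernard/Cheng--Yan locally connecting mechanism \cite{B,CY1,CY2}, which converts (i)--(ii) into heteroclinic orbits between Aubry sets of nearby classes, chained along the path from $c_-$ to $c_+$. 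Alternatively, your outline can be rescued without this machinery by descending to the map: inside $R$ the time-$1$ map of $\phi_H^t$ \emph{is} the billiard map (that is the content of the Main Conclusion), an exact area-preserving monotone twist map for which $R$ is a Birkhoff region of instability, so \cite{Ma3} genuinely applies and yields orbits visiting $U_0$ and $U_1$ in turn; note that this route makes your Tonelli modification of $L$ superfluous. Either way, the step you label ``applies directly'' is the actual mathematical content of the corollary, and as written, with the citation to \cite{Ma2}, it is a gap. A smaller point: your localization needs $\omega_0>0$, i.e.\ that the lower curve is not the zero section $\{l=0\}$; this holds because Lazutkin's caustics accumulate on the boundary, so $\{l=0\}$ has no adjacent invariant curve, but it deserves a sentence of justification.
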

We postpone the proof of this Corollary to Section \ref{3}, where we list some special applications of the Aubry Mather theory for the billiard maps as well.
\section{Proof of the main Theorem}\label{2}
Recall that (\ref{eq1}) is explicit only for sufficiently small $v$, and the expansion of (\ref{generating}) can be estimated by:
\begin{equation}\label{expansion}
\tilde{h}(x,x^+)=\frac{1}{6}\Delta x^3+\cO(\Delta x^4)
\end{equation}
for sufficiently small $\Delta x=x^+-x$. So we just need to care the dynamic behavior of the Lazutkin's map (\ref{eq}). Then we can slightly change $\tilde{h}(x,X)$ into 
\[
h(x,X)=\frac{1}{6}(X-x)^3+(X-x)^4P(x,X)\rho(X-x)
\]
with $P(x+1,X+1)=P(x,X)$, $x\leq X$ and $x\in\R$. Here $\rho(\cdot)\in C^r(\R,\R)$ satisfying
\[
\rho(t)=
\begin{cases}
1 & t\in[0,\eps]\\
0 & t\in[\sqrt{\eps},\infty)
\end{cases}
\]
is just a smooth transitional smooth function with $\|\rho''(t)\|\leq\frac{1}{\eps}$ and $\|\rho'(t)\|\leq\frac{1}{\sqrt{\eps}}$. We can always take $\eps\ll1$ sufficiently small such that there exists a constant $c$ depending only on $\eps$ and 
\[
-\partial_{12}h(x,X)>c(X-x),\quad\forall x<X.
\]
\begin{rmk}
Actually, this modified generating function conform with (\ref{eq}) only for the domain $\Theta:=\{(x,l)\in\T\times[0,\infty)|\sqrt{2}l^{1/2}+2\sqrt{2}l^{3/2}f(x,\sqrt{2l})\leq\eps\}$. Due to the conclusion of \cite{L}, We can always find a list of KAM tori $\mathcal{T}_{\omega}=\{(x,l_\omega(x))|x\in\T\}$ with the Diophantine frequency $\om\in E(\eps)$. We can always pick one $\mathcal{T}_\om$ which encloses an invariant domain $\Lambda$ with $\mathcal{T}_0=\{(x,0)|x\in\T\}$ as the lower bound. $\Theta\subseteq\Lambda$ and (\ref{eq}) is still the map in $\Lambda$.
\end{rmk}

This proof for this Theorem is outlined as follows: We first suspend (\ref{eq}) by a smooth but non-periodic Hamiltonian $H(x,l,t)$, then slightly modified it into a periodic but only piecewise continuous $\hat{H}(x,l,t)$. The last step, we polish $\hat{H}(x,l,t)$ to $\tilde{H}(x,l,t)$ which becomes smooth of time $t$ again, $t\in\T$. Due to (\ref{eq}), $\tilde{H}(x,l,t)$ can be formally expressed as (\ref{formal}).
\subsection{Suspension}

\begin{lem}\label{suspension}
There exists a Lagrangian $L(x,v,t)$ such that 
\begin{equation}\label{variation}
h(x,X)=\inf_{\begin{subarray}{1}
\gamma\in Lip([0,1],\R)\\\gamma(0)=x,\gamma(1)=X
\end{subarray}
}\int_0^1L(\gamma(t),\dot\gamma(t),t)dt, \quad X\geq x
\end{equation}
for $h(x,X)=c_1(X-x)^3+\cO(|X-x|^4)$, $c_1>0$.
\end{lem}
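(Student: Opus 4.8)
The plan is to realize $h$ as the time-$1$ minimal action of a Lagrangian built as an explicit perturbation of the integrable model, deferring periodicity in $t$ to the later steps of the outline. I would begin with the leading order. The integrable twist $x^+=x+\sqrt2\,l^{1/2}$, $l^+=l$ underlying (\ref{eq}) is exactly the time-$1$ flow of $H_0(l)=\tfrac{2\sqrt2}{3}l^{3/2}$, since $\dot x=\partial_lH_0=\sqrt2\,l^{1/2}$ and $\dot l=-\partial_xH_0=0$. Its time-$t$ map carries the first-kind generating function
\[
S_0^t(x,X)=\frac{(X-x)^3}{6t^2}=t\,L_0\!\Big(\frac{X-x}{t}\Big),\qquad L_0(v)=\frac{v^3}{6},
\]
whose extremals are the straight lines $\gamma(t)=x+t(X-x)$ and whose value at $t=1$ is $c_1(X-x)^3$ with $c_1=\tfrac16$, matching (\ref{expansion}). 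Since $L_0''(v)=v>0$ for $v>0$, on the relevant sector $X\ge x$ these straight lines are genuine minimizers and the twist condition holds. This fixes both the kinetic skeleton $L_0$ and the self-similar interpolation $S^t_0$ that I will perturb.

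Next I would suspend the full map. Writing $h(x,X)=c_1(X-x)^3+R(x,X)$ with $R=\cO(|X-x|^4)$, I would construct an exact-symplectic isotopy $\Phi^t$, $t\in[0,1]$, joining the identity to the twist map $\phi$ generated by $h$, by deforming the self-similar family $S_0^t$ so that the endpoint generating function is $S^1=h$ while each $S^t$ still satisfies $-\partial_{12}S^t>0$. Recovering the (a priori non-periodic) Hamiltonian from $\partial_t\Phi^t=X_H$ gives $H(x,l,t)=H_0(l)+\cO(l^{5/2})$, consistent with (\ref{formal}); Legendre-transforming in $l$, which is legitimate because $\partial_{ll}H_0=\tfrac{\sqrt2}{2}l^{-1/2}>0$, produces $L(x,v,t)=\tfrac{v^3}{6}+\cO(v^5)$ as in (\ref{formal2}). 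By construction $\phi^1_H=\phi$, so the Hamilton--Jacobi least-action principle identifies the minimal action of $L$ over curves with $\gamma(0)=x$, $\gamma(1)=X$ with a first-kind generating function of $\phi$, hence with $h(x,X)$ up to an additive constant that the common asymptotics $\sim c_1(X-x)^3$ forces to vanish. Strict convexity and superlinearity of $L$ in $v$ on the relevant region then give existence and uniqueness of the minimizer, yielding (\ref{variation}).

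The hard part will be the degeneracy of the kinetic term at $v=0$. Because $L_0''(v)=v\to0$ (equivalently $\partial_{ll}H_0\to\infty$) as $v\to0$, the Lagrangian loses uniform convexity precisely on the boundary of the domain of interest; this is exactly the singular situation excluded in \cite{M}, so no off-the-shelf suspension theorem applies, and the substance of the argument is quantitative. I would estimate $R$ together with its first two derivatives to show that the correction to $L_0$ is genuinely of order $v^5$ in the $\|\cdot\|_{C^2}$ norm, so that $L_{vv}=v+\cO(v^3)>0$ survives on a thin sector near $v=0$; then I would show that any competing curve with the prescribed endpoints dipping toward $v=0$ pays strictly larger action, confining the minimizer to the region where $L$ is strictly convex and coercive. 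Separating the singular kinetic part from the perturbation in $C^2$ with explicit $\eps$-dependent bounds—rather than the uniform bounds available in the regular twist case—is the crux, and it is precisely what will later permit the passage to a time-periodic, smooth Hamiltonian.
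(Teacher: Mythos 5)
Your leading-order analysis is correct: $H_0(l)=\frac{2\sqrt{2}}{3}l^{3/2}$ indeed has time-$t$ generating function $(X-x)^3/6t^2$, conjugate Lagrangian $v^3/6$, and straight-line extremals, matching $c_1=\frac16$. But the core step of your argument --- ``I would construct an exact-symplectic isotopy $\Phi^t$ joining the identity to $\phi$ by deforming the self-similar family $S_0^t$ so that the endpoint generating function is $S^1=h$ while each $S^t$ still satisfies $-\partial_{12}S^t>0$'' --- is asserted, not constructed, and that assertion is essentially the lemma itself. To make it a proof you would have to exhibit the family $S^t$ concretely, verify the twist condition for every $t$ uniformly down to $X-x=0$ (where it degenerates), show the family is smooth in $t$ so that it is generated by a genuine time-dependent Hamiltonian, and show that this $H$ stays smooth and of the form $H_0+\cO(l^{5/2})$ up to the degenerate edge $l=0$. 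None of this is supplied, and it is exactly where Moser's interpolation in \cite{M} fails because of the vanishing twist, as you yourself note in your last paragraph. So the proposal reduces the statement to an unproved claim of the same strength, with the hard quantitative content deferred rather than carried out.

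For contrast, the paper sidesteps the isotopy entirely by constructing $L$ in closed form: impose that the Euler--Lagrange equation be linear, so that all extremals are straight lines; differentiating the linear E--L equation in $v$ gives $L_{vvx}v+L_{vvt}=0$, which by characteristics forces $L_{vv}(x,v,t)=G(x-vt,v)$; the normalization $L(x,0,t)=L_v(x,0,t)=0$ together with the requirement that straight-line actions reproduce $h$ then determines $G(x,X-x)=-\partial_{12}h(x,X)$, i.e.
\[
L(x,v,t)=-\int_0^v(v-\eta)\,\partial_{12}h(x-\eta t,\,x+\eta(1-t))\,d\eta .
\]
One then checks directly that along the line from $x$ to $X$ the boundary momenta are $L_v|_{t=0}=-\partial_1h(x,X)$ and $L_v|_{t=1}=\partial_2h(x,X)$, and that $L_{vv}=-\partial_{12}h(x-vt,x+v(1-t))\ge 0$ by the twist property (vanishing only at $v=0$), so straight lines are minimizers and (\ref{variation}) holds. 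If you want to rescue your route, the missing idea is precisely such an explicit formula: for instance the family $S^t(x,X)=\int_0^t L\big(x+s\tfrac{X-x}{t},\tfrac{X-x}{t},s\big)\,ds$ generated by the $L$ above is the isotopy you postulate --- but writing it down amounts to doing the paper's proof first.
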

\begin{proof}
Actually, the selection of $L(x,v,t)$ is rather arbitrary, so we can specially choose the one with linear Euler-Lagrange equation, i.e.
\[
L_{vx}(x,v,t)v+L_{vt}(x,v,t)=L_x(x,v,t).
\]
Differentiate both sides with respect to $v$ variable, we should have
\[
L_{vvx}(x,v,t)v+L_{vvt}(x,v,t)=0.
\]
By characteristic method we can solve previous P.D.E by
\[
G(x-vt,v)=L_{vv}(x,v,t),
\]
where $G(\cdot,\cdot)$ is a designated function later on. Then
\begin{eqnarray*}
L(x,v,t)&=&L(x,0,t)+\int_0^vL_v(x,\eta,t)d\eta\\
&=&L(x,0,t)+\int_0^vL_v(x,v-\eta,t)d\eta\\
&=&L(x,0,t)+L_v(x,0,t)v+\int_0^v\eta L_{vv}(x,v-\eta,t)d\eta\\
&=&L(x,0,t)+L_v(x,0,t)v+\int_0^v\eta G(x-(v-\eta)t,v-\eta)d\eta\\
&=&L(x,0,t)+L_v(x,0,t)v+\int_0^v(v-\eta)G(x-\eta t,\eta)d\eta.
\end{eqnarray*}
This is just a formal deduction, and we can specially choose the boundary conditions by $L(x,0,t)=L_v(x,0,t)=0$. Conversely, these trivial boundary conditions constraint
\begin{equation}\label{twist}
-\partial_{12}h(x,X)=G(x,X-x)
\end{equation}
if we take it back into (\ref{variation}). By aware that $-\partial_{12}h(x,X)>0$ only for $X>x$, and $G(x,0)=0$. That means the twist index decays to $0$ for $X-x\rightarrow 0$, which is quite different from the case considered by J. Moser in \cite{M}. 
Finally we can solve the Lagrangian by
\[
L(x,v,t)=-\int_0^v(v-\eta)\partial_{12}h(x-\eta t,x+\eta(1-t))d\eta
\]
for $(x,v,t)\in\T\times[0,+\infty)\times[0,1]$. 
\end{proof}
By the Legendre transformation we get 
\[
H(x,l,t)=\max_v\{vl-L(x,v,t)\},\quad (x,l,t)\in\T\times[0,+\infty)\times[0,1],
\]
where the maximum achieves for $L_v(x,v,t)=l$. Recall that 
\begin{equation}\label{momentum1}
L_v(x,v,t)=-\int_0^v\partial_{12}h(x-\eta t,x+\eta(1-t))d\eta\geq0
\end{equation}
and
\[
L_{vv}(x,v,t)=-\partial_{12}h(x-v t,x+v(1-t))\geq0
\]
with `=' holds only for $v=0$, so $L_v$ is a diffeomorphism between $v$ and $p$.
\begin{rmk}
We can generalize (\ref{variation}) to a rescaled version: By taking
\begin{equation}\label{rescale}
L_{ab}(x,v,t)=\frac{1}{b-a}L(x,v,\frac{t-a}{b-a}),
\end{equation}
the following variational principle holds:
\begin{equation}
h(x,X)=\inf_{\begin{subarray}{1}
\gamma\in Lip([a,b],\R)\\\gamma(a)=x,\gamma(b)=X
\end{subarray}
}\int_a^bL_{ab}(\gamma(t),\dot\gamma(t),t)dt, \quad X\geq x.
\end{equation}
Accordingly, 
\[
H_{ab}(x,l,t)=\frac{1}{b-a}H(x,l,\frac{t-a}{b-a}),\quad (x,l,t)\in\T\times[0,+\infty)\times[0,1].
\]
\end{rmk}
%\begin{proof}
%The proof is totally a repeat of aforementioned process. We just need to rescale the time
%\[
%t\in[a,b]\rightarrow \frac{t-a}{b-a}\in[0,1]
%\]
%and the velocity
%\[
%v\in[0,\infty)\rightarrow\frac{v}{b-a}\in[0,\infty)
%\]
%then we can get (\ref{rescale}).
%%Take $L_{ab}(x,v,t)$ on $\T\times[0,\infty)\times[a,b]$, 
%%\[
%%\partial_{vv}L_{ab}(x,v,t)=G_{ab}(x-v(t-a),v)
%%\]
%%and
%%\begin{eqnarray*}
%%L_{ab}(x,v,t)&=&\int_0^v\eta\partial_{vv}L_{ab}(x,v-\eta,t)d\eta\\
%%&=&\int_0^v\eta G_{ab}(x-(v-\eta)(t-a),v-\eta)d\eta.
%%\end{eqnarray*}
%%Due to the twist property,
%%\[
%%-\partial_{12}h(x,X)=\frac{1}{b-a}\partial_{vv}L_{ab}(x,\frac{X-x}{b-a},a)=\frac{1}{b-a}G_{ab}(x,\frac{X-x}{b-a}).
%%\]
%%Then
%%\[
%%L_{ab}(x,v,t)=-(b-a)\int_0^v\eta\partial_{12}h(x-(v-\eta)(t-a),x)d\eta
%%\]
%\end{proof}
\subsection{Periodic Extension}Mention that $H(x,l,t)$ is not periodic of $t$ yet, so we need to modify it to adapt $\phi$.
Recall that $X=f(x,l)$ and $-\partial_{12}h(x,X)=\frac{\partial l}{\partial X}\geq0$ for $X\geq x$, so we can assume 
\[
S_\phi(x,l):=h(x,f(x,l)),\quad(x,l)\in\R\times[0,\infty)
\]
and easily prove that
\[
dS_\phi=LdX-ldx=g(x,l)df(x,l)-ldx,\quad(x,l)\in\R\times(0,\infty). 
\]
 Now for a integrable map $\psi^\kappa:(x,l)\rightarrow (x+\kappa \sqrt{2l},l)$ on $\T\times[0,\infty)$, we can similarly
get
\[
S_{\psi^\kappa}(x,l)=\frac{\sqrt{2}\kappa}{3}l^{3/2},\quad(x,l)\in\R\times[0,\infty).
\]
For sufficiently small $0<\kappa<\frac{1}{5}$, we can define a modified map 
\begin{equation}\label{H'}
\varphi(x,l):=\psi^{-\kappa}\circ\phi\circ\psi^{-\kappa}(x,l)=(f'(x,l),g'(x,l)),\quad(x,l)\in\R\times[0,\infty)
\end{equation}
satisfying
\begin{eqnarray}
f'(x,l)&=&x+\sqrt{2l}(1-2\kappa)+2\sqrt{2}l^{3/2}\tilde{f}'(x-\kappa\sqrt{2l},\sqrt{2l})\nonumber\\
g'(x,l)&=&l+4\sqrt{2}l^{5/2}\tilde{g}'(x-\kappa\sqrt{2l},\sqrt{2l})
\end{eqnarray}
which still ensures the strictly twist property $\partial_2 f'(x,l)>0$. Moreover, we can find $S_{\varphi}(x,l)$
satisfying 
\begin{equation}\label{extension}
dS_\varphi=g'(x,l)df'(x,l)-ldx,\quad(x,l)\in\R\times(0,\infty).
\end{equation}
This is because the following Lemma:
\begin{lem}[Ex 57.5 in \cite{G}]
Suppose $F$, $G$ are two exact symplectic maps of $(T^*M,dy\wedge dx)$, if $G^*ydx- ydx= dS_G$ and $F^*ydx-ydx= dS_F$  then
\[
(F\circ G)^*ydx-ydx=d [S_F\circ G+S_G].
\]
\end{lem}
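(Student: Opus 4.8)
The plan is to exploit two purely formal properties of the pullback operation: its contravariant functoriality under composition, and its commutation with the exterior derivative. Writing $\theta = ydx$ for the canonical one-form, the hypotheses read $F^*\theta - \theta = dS_F$ and $G^*\theta - \theta = dS_G$, and the target is the analogous identity for $F\circ G$. The symplectic condition itself plays no role in the computation; all that is used is that each map pulls $\theta$ back to itself plus an exact term.

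First I would rewrite the left-hand side using contravariance, $(F\circ G)^* = G^*\circ F^*$, so that
\[
(F\circ G)^*\theta = G^*\bigl(F^*\theta\bigr) = G^*\bigl(\theta + dS_F\bigr) = G^*\theta + G^*(dS_F).
\]
Into the first term I would substitute the hypothesis $G^*\theta = \theta + dS_G$. For the second term I would invoke naturality of the exterior derivative, $G^*\circ d = d\circ G^*$, together with the elementary fact that the pullback of the $0$-form $S_F$ is the composition $S_F\circ G$, giving $G^*(dS_F) = d(G^*S_F) = d(S_F\circ G)$. Collecting the two pieces yields
\[
(F\circ G)^*\theta - \theta = dS_G + d(S_F\circ G) = d\bigl[S_F\circ G + S_G\bigr],
\]
which is exactly the claimed formula.

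The argument is entirely mechanical, so I do not expect a genuine obstacle; the only point demanding care is the bookkeeping of composition order. One must use $(F\circ G)^* = G^*\circ F^*$ rather than the reverse, and correspondingly pull $S_F$ back by $G$ and not by $F$. Getting this right is precisely what produces the asymmetric combination $S_F\circ G + S_G$, rather than the symmetric-looking $S_F + S_G\circ F$; it reflects that the generating function of the composite is read off by first applying $G$ and then the already-transported data of $F$. No regularity or geometric input beyond smoothness of $F$, $G$, $S_F$, $S_G$ is required, which is why the identity holds verbatim in the present singular setting where the twist degenerates as $l\rightarrow0$ and is exactly what licenses the additive rule $S_\varphi = S_{\psi^{-\kappa}}\circ(\phi\circ\psi^{-\kappa}) + S_{\phi\circ\psi^{-\kappa}}$ used in \eqref{extension}.
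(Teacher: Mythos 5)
Your proof is correct and complete: the identity follows, exactly as you compute, from $(F\circ G)^*=G^*\circ F^*$, naturality of $d$, and the fact that the pullback of the $0$-form $S_F$ under $G$ is $S_F\circ G$, with the composition-order bookkeeping handled properly. The paper itself gives no proof of this lemma, merely citing Ex.~57.5 of Gol\'e's book, and your mechanical pullback computation is precisely the standard argument behind that citation; your closing remark about how the lemma is invoked to obtain $S_\varphi$ in (\ref{extension}) also matches the paper's use of it, where the only geometric care needed is the restriction to $\{l>0\}$ so that the primitives are defined.
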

We can apply this Lemma with $M=\T$ and restrict on the upper semi-part $\{(x,l)\in T^*\T|l>0\}$. Once we get $S_{\varphi}(x,l)$, we can get the generating function by
\[
h_\varphi(x,X)=S_\varphi(x,f'^{-1}(x,X)),\quad X>x,\;x\in\R
\]
because $X=f'(x,l)$ and the twist property. Actually, there exists a constant $c'>0$ depending only on $\kappa$ and $\eps$, such that $\partial_2f'(x,l)\geq c'/\sqrt{l}$. Then $0<\partial_2f'^{-1}(x,X)\leq\sqrt{f'^{-1}(x,X)}/c_1$ and 
\begin{eqnarray*}
-\partial_1h_\varphi(x,X)&=&f'^{-1}(x,X),\\
\partial_2h_\varphi(x,X)&=&g'(x,f'^{-1}(x,X)),\\
-\partial_{11}h_\varphi(x,X)&=&-\frac{\partial_1f'(x,f'^{-1}(x,X))}{\partial_2 f'(x,f'^{-1}(x,X))},\\
-\partial_{12}h_\varphi(x,X)&=&\partial_2f'^{-1}(x,X),\\
\partial_{22}h_\varphi(x,X)&=&\partial_2g'(x,f'^{-1}(x,X))\cdot\partial_2f'^{-1}(x,X),
\end{eqnarray*}
all converge to $0$ as $X\rightarrow x$. That means $h_{\varphi}(x,X)$ can be at least $C^2-$smoothly extended to the domain $\{X\geq x|x\in\R\}$.\\

As we have already got the generating function $h_{\varphi}(x,X)$, we can apply Lemma \ref{suspension} one more time and get a modified Hamiltonian $H'(x,l,t)$ such that $\varphi_{H'}^t$ is the interpolating flow
with $\varphi_{H'}^1=\varphi$ and $\varphi_{H'}^0=id$, $t\in[0,1]$, $(x,l)\in\T\times[0,\infty)$. In other words, $\phi$ can be suspended by the following modified flow
\begin{equation}
\chi_t=\left\{
\begin{array}{cccccc}
\psi^t & & 0\leq t<\kappa,\\
\varphi_{H'}^{\frac{t-\kappa}{1-2\kappa}}\circ\psi^\kappa & & \kappa\leq t\leq1-\kappa,\\
\psi^{t-1}\circ\varphi & & 1-\kappa<t\leq 1,
\end{array}\right.
\end{equation}
which is generated by the periodic Hamiltonian 
\begin{equation}\label{piecewise}
\hat{H}(x,l,t)=\left\{
\begin{array}{cccccc}
\frac{2\sqrt{2}}{3}l^{3/2} & & t\in[0,\kappa)\bigcup(1-\kappa,1],\vspace{5pt}\\
\frac{1}{1-2\kappa}H'(x,l,\frac{t-\kappa}{1-2\kappa})& & \kappa\leq t\leq 1-\kappa,
\end{array}\right.
\end{equation}
with $(x,l,t)\in\T\times[0,\infty)\times\T$.  Later we will see that for sufficiently small $l\ll1$, 
\[
\partial_{ll}H'(x,l,t)\geq c''/\sqrt{l}
\] 
with $c''\sim\cO(1)$ a constant depending only on $\eps$ (see the part of {\it Re-evaluation}). 
%This can be achieved by (\ref{twist}) and
%\[
%H'_{ll}(x,l,t)\cdot L'_{vv}(x,v,t)=1
%\]

Unfortunately, there comes out two discontinuities of $\hat{H}(x,l,t)$ at $t=\kappa$, $1-\kappa$. Later we will polish $\hat{H}(x,l,t)$ into a smooth $\tilde{H}(x,l,t)$ with flow $\tilde{\varphi}^t$ satisfying $\tilde{\varphi}^0=id$ and $\tilde{\varphi}^1=\phi$.
\subsection{Smoothness}
To find a smooth time-periodic $\tilde{H}(x,l,t)$ being the suspension for $\phi$, the following tools are necessary:
\subsubsection{Re-evaluation of $H'(x,l,t)$}
The analysis of this subsection is based on a simple fact: the Euler-Lagrange flow of Lemma \ref{suspension} has a constant velocity, i.e.
\begin{eqnarray}\label{speed}
\partial_lH'(x(t),l(t),t)&=&\partial_lH'(x+(X-x)t,l(t),t)\nonumber\\
&=&v(t)\nonumber\\
&=&X-x\nonumber\\
&=&\sqrt{2l}(1-2\kappa)+2\sqrt{2}l^{3/2}f(x-\kappa\sqrt{2l},\sqrt{2l}),\quad\forall t\in[0,1],
\end{eqnarray}
if $x(0)=x$, $x(1)=X$ and $l(0)=l$. This is because $\varphi$ has been established in (\ref{H'}). \\

On the other side, the corresponding generating function $h_{\varphi}$ satisfies
\begin{eqnarray}\label{eva h'}
-\partial_{12}h_{\varphi}(x-\eta t,x-\eta t+\eta)&=&\Big{(}\frac{\sqrt{2}}{2}(1-2\kappa)l_\eta^{-1/2}+3\sqrt{2l_\eta}\tilde{f}'(x-\eta t-\kappa\sqrt{2l_\eta},\sqrt{2l_\eta})\Big{)}^{-1}\nonumber\\
&=&\frac{\sqrt{2l_\eta}}{1-2\kappa}-\frac{6\sqrt{2}}{(1-2\kappa)^2}l_\eta^{3/2}f(x-\eta t-\kappa\sqrt{2l_\eta},\sqrt{2l_\eta})+\cO(l_\eta^{5/2})
\end{eqnarray}
with $\|\tilde{f}'(\cdot,\cdot)\|_{C^{r-5}}$ bounded and
\be
\eta=\sqrt{2l_\eta}(1-2\kappa)+2\sqrt{2}l_\eta^{3/2}f(x-\eta t-\kappa\sqrt{2l_\eta},\sqrt{2l_\eta})
\ee
as long as $0<\eta\leq X-x\ll1$. Conversely,
\be
l_\eta=\frac{\eta^2}{2(1-2\kappa)^2}-\frac{\eta^4}{(1-2\kappa)^5}f(x-\eta t-\frac{\kappa\eta}{1-2\kappa},\frac{\eta}{1-2\kappa})+\cO(\eta^5).
\ee
Taking (\ref{eva h'}) into (\ref{momentum1}) implies that $\forall (x,l,t)\in\T\times[0,\eps)\times\T$,
\begin{eqnarray}\label{momentum2}
l(t)&=&L'_v(x(t),v(t),t)\nonumber\\
&=&\int_0^{X-x}\frac{\eta}{(1-2\kappa)^2}-\frac{8\sqrt{2}}{(1-2\kappa)^2}l_\eta^{3/2}f(x-\eta t-\kappa\sqrt{2l_\eta},\sqrt{2l_\eta})d\eta\nonumber\\
&=&\frac{(X-x)^2}{2(1-2\kappa)^2}-\int_0^{X-x}\frac{8\sqrt{2}}{(1-2\kappa)^2}l_\eta^{3/2}f(x-\eta t-\kappa\sqrt{2l_\eta},\sqrt{2l_\eta})d\eta\nonumber\\
&=&l+\frac{4l^2}{1-2\kappa}f(x-\kappa\sqrt{2l},\sqrt{2l})-\int_0^{X-x}\frac{8\sqrt{2}}{(1-2\kappa)^2}\cdot\frac{\eta^3f(x,0)}{2\sqrt{2}(1-2\kappa)^3}+\cO(\eta^4)d\eta\nonumber\\
&=&l+l^{5/2}\bar{f}'(x,\sqrt{2l},t)
\end{eqnarray}
with $\|\bar{f}'(\cdot,\cdot,\cdot)\|_{C^{r-6}}$ is bounded due to L'Hospitale Principle for $0<X-x\ll1$. Here $L'$ is the conjugated Lagrangian of $H'$. Now if we take (\ref{momentum2}) back into (\ref{speed}), we finally re-evaluate $H'(x,l,t)$ by
\be
H'(x,l,t)=\frac{2\sqrt{2}}{3}(1-2\kappa)l^{3/2}+l^{5/2}V'(x,\sqrt{l},t),\quad(x,l,t)\in\T\times\R^+\times\T,
\ee
where $\|V'(\cdot,\cdot,\cdot)\|_{C^{r-5}}$ is bounded.
\begin{rmk}
Recall that from (\ref{rescale}) of previous remark, (\ref{piecewise}) now becomes
\be\label{square hamilton}
\hat{H}(x,l,t)=\left\{
\begin{array}{cccccc}
\frac{2\sqrt{2}}{3}l^{3/2} & & t\in[0,\kappa)\bigcup(1-\kappa,1],\vspace{5pt}\\
\frac{2\sqrt{2}}{3}l^{3/2} +\frac{l^{5/2}}{1-2\kappa}V'(x,\sqrt{l},\frac{t-\kappa}{1-2\kappa})& & \kappa\leq t\leq 1-\kappa.
\end{array}\right.
\ee
We can see that the kinetic part is always $\frac{2\sqrt{2}}{3}l^{3/2}$ for all $t\in[0,1]$, which is much greater than the perturbation part $\frac{l^{5/2}}{1-2\kappa}V'(x,\sqrt{l},\frac{t-\kappa}{1-2\kappa})$ !!!!\\
\end{rmk}

\subsubsection{New generating function}
 Here we involve a new type generating function $S(X,l,t):=Xl+w(X,l,t)$ which corresponds to the flow map $\phi_H^t$ of the Hamiltonian $H(x,l,t)$, where $(x,l,t)\in\T\times[0,\infty)\times[0,1]$ ($H(x,l,t)$ is not necessarily time-periodic). Recall that $\phi_H^t$ is exact symplectic, that means 
\be\label{general}
\partial_Xw=L-l,\;\partial_lw=x-X
\ee
for $\phi_H^t(x,l)=(X,L)$. 
%Due to a direct citation of \cite{M}, we get
%\be\label{connection}
%-\partial_{t}w(X,l,t)=H(X,L,t),\quad t\in[0,1],\;X\in\T,\;l\leq L\in[0,\infty).
%\ee
%Then
%\[
%\partial_2H(X,L,t)=-\frac{\partial_{tl}w(X,l,t)}{1+\partial_{Xl}w(X,l,t)}
%\]
%and $\partial_{22}H(X,L,t)>0$ is equivalent to
%\be
%\partial_{llt}w-\frac{\partial_{lt}w\partial_{Xll}w}{1+\partial_{Xl}w}<0.
%\ee
{ Then we have
\[
-\partial_lw(X,l,t)=X-x=\int_0^t\partial_2H(\phi_H^s(x,l),s)ds.
\]
By deriving of variable $t$ on both sides, 
\[
-\partial_{lt}w(X,l,t)=\partial_2H(\phi_H^t(x,l),t)=\partial_2H(X,L,t)
\]
and by integrating both sides we get
\begin{eqnarray}\label{connection}
H(X,L,t)-H(X,0,t)&=&H(X,L,t)\nonumber\\
&=&\int_0^L\partial_2H(X,Z,t)dZ\nonumber\\
&=&-\int_0^l\partial_{lt}w(X,\zeta,t)\frac{\partial Z}{\partial\zeta}d\zeta,\quad(\phi_H^t(\xi,\zeta)=(X,Z))\nonumber\\
&=&-\int_0^l\partial_{lt}w(X,\zeta,t)(1+\partial_{lX}w(X,\zeta,t))d\zeta.
\end{eqnarray}
Then  $\partial_{22}H(X,L,t)>0$ is equivalent to
\be\label{positive-defn}
\frac{\partial_{tll}w(X,l,t)}{1+\partial_{Xl}w(X,l,t)}<0.
\ee
}
Now we turn back to the Hamiltonian $\hat{H}(x,l,t)$ which conforms to (\ref{square hamilton}). We can polish it with the following convolution trick:\\

For $0<\rho<\kappa$, we can define a bump function by $\eta(s)\in C^\infty(\R)$, $0\leq\eta\leq 1$, $\eta(s)=0$ for $|s|\geq1$ and
\[
\int_\R\eta(s)ds=1.
\]
Then
\be
H^*(x,l,t):=\hat{H}*\eta_\rho,\quad\eta_\rho:=\frac{\eta(t/\rho)}{\rho}
\ee
becomes $C^{r-5}$ smooth on $(x,l,t)\in\T\times[0,\eps)\times[0,1/2]$. Moreover, 
\[
H^*(x,l,t)=\frac{2\sqrt{2}}{3}l^{3/2}=\hat{H}(x,l,t)
\] 
for $0\leq t<\kappa-\rho$ and formally 
\be\label{smooth hamilton}
H^*(x,l,t)=\frac{2\sqrt{2}}{3}l^{3/2}+\frac{l^{5/2}}{1-2\kappa}V^*(x,\sqrt{l},t)
\ee 
for $t\in[0,1/2]$. Actually,
\be
V^*(x,\sqrt{l},t)=\int_{-\infty}^{+\infty}\hat{V}(x,\sqrt{l},t-s)\cdot\eta_\rho(s)ds
\ee
with
\be
\hat{V}(x,\sqrt{l},t)=\left\{
\begin{array}{cccccc}
0 & & t\in[0,\kappa)\bigcup(1-\kappa,1],\vspace{5pt}\\
V'(x,\sqrt{l},\frac{t-\kappa}{1-2\kappa})& & \kappa\leq t\leq 1-\kappa.
\end{array}\right.
\ee
So $\|\partial_1^{\alpha_1}\partial_2^{\alpha_2}(V^*(x,\sqrt{l},t)-\hat{V}(x,\sqrt{l},t))\|\leq c_\alpha\rho$ for $\alpha=(\alpha_1,\alpha_2)\in\mathbb{N}^2$ with $|\alpha_1|+|\alpha_2|\leq r-5$ and $t\in[0,\kappa-\rho)\cup(\kappa+\rho,1/2]$, where $c_\alpha$ is a constant depending on $\|V'(\cdot,\cdot,\cdot)\|_{C^{r-5}}$.\\

Corresponding to the flow map $\phi_{\hat{H}}^t$ and $\phi_{H^*}^t$, we can separately find $\hat{w}$ and $w^*$.  Such a generating function is always available for $t\in[0,2\kappa]$ with $0<\kappa\ll1$ sufficiently small. Benefit from the special form of (\ref{square hamilton}) and (\ref{smooth hamilton}), for $0<l\ll\rho<\kappa<1/5$, 
\be\label{sp1}
\hat{w}(X,l,t)=-\frac{2\sqrt{2}}{3}l^{3/2}t-l^{5/2}\hat{W}(X,\sqrt{l},t)t
\ee
and 
\be\label{sp2}
w^*(X,l,t)=-\frac{2\sqrt{2}}{3}l^{3/2}t-l^{5/2}W^*(X,\sqrt{l},t)t
\ee
due to (\ref{general}) and (\ref{connection}). Besides, 
\[
\|\partial_1^{\alpha_1}\partial_2^{\alpha_2}\hat{W}(x,\sqrt{l},t)\|\leq \hat{c}_\alpha,\quad\|\partial_1^{\alpha_1}\partial_2^{\alpha_2}W^*(x,\sqrt{l},t)\|\leq c_\alpha^*
\] 
with $\alpha=(\alpha_1,\alpha_2)\in\mathbb{N}^2$ and $\hat{c}_\alpha$, $c_\alpha^*$ depend on $\|V'(\cdot,\cdot,\cdot)\|_{C^{r-5}}$. Recall that $\hat{w}=w^*$ for $0<t\leq\kappa-\rho$, and both $\hat{w}$  and $w^*$ is $C^{r-5}$ smooth on $t\in[0,\kappa)\bigcup(\kappa,1/2]$!!\\

%\textr{Here we need to use the bootstrap principle: for $t\in[0,1]$, the momentum variable of $H^*$ satisfies
%\[
%\|l^*(t)-l^*(0)\|\leq c^*t\cdot l^{*5/2}(0),\quad\forall t\in[0,1]
%\]
%as long as $0\leq l^*(0)\ll1$ is sufficiently small. Here the constant $c^*$ depends only on $\|V'(\cdot,\cdot,\cdot)\|_{C^{r-5}}$.
%\[
%\dot{l}(t)\leq l^{5/2}(t),\quad l(0)\ll1,
%\]
%leads to
%\[
%0\leq l(t)\leq\frac{l(0)}{\sqrt{(1-1.5tl(0)^{3/2})^{2}}},\quad t\in[0,1].
%\]
%Conversely, we can benefit from previous argument and get
%\[
%\|x^*(t)-x^*(0)\|\leq c^*t\sqrt{l^*(0)}.
%\]
%}

For $t_1$, $t_2$ satisfying $\kappa<t_1<t_2<2\kappa$, we can define a cut-off function $\xi(t)\in C^\infty(\R)$ with $\xi=1$ for $t<t_1$ and $\xi=0$ for $t>t_2$ and
\be\label{construction}
\tilde{w}(X,l,t)=(1-\xi)\hat{w}+\xi w^*
\ee 
becomes a smooth generating function on $t\in[0,2\kappa]$ and 
\[
\tilde{w}(X,l,t)=\hat{w}(X,l,t), \quad t\in[0,\kappa-\rho)\bigcup(t_2,2\kappa].
\]
Actually, we can formally express $\tilde{w}$ by
\be\label{smooth gene}
\tilde{w}(X,l,t)=-\frac{2\sqrt{2}}{3}l^{3/2}t-l^{5/2}t\Big{[}(1-\xi(t))\hat{W}(X,\sqrt{l},t)+\xi (t)W^*(X,\sqrt{l},t)\Big{]}
\ee
Then due to (\ref{connection}) we can find a $C^{r-5}$ smooth Hamiltonian $\tilde{H}(x,l,t)$ and the flow map $\phi_{\tilde{H}}^t(x,l)$ satisfies
\[
\phi_{\tilde{H}}^{1/2}(x,l)=\phi_{\hat{H}}^{1/2}(x,l).
\]
The only thing we need to do is to prove the positive definiteness of $\tilde{H}$, i.e. $\tilde{H}_{ll}>0$ for $t\in[0,1/2]$. From (\ref{construction}) and (\ref{positive-defn}) we just need to prove
\[
\frac{\xi'(t)(w^*_{ll}-\hat{w}_{ll})+\tilde{w}_{tll}}{1+\tilde{w}_{Xl}}<0
\]
for $t\in[t_1,t_2]$.
This is true because
\begin{eqnarray}
\frac{\xi'(t)(w^*_{ll}-\hat{w}_{ll})+\tilde{w}_{tll}}{1+\tilde{w}_{Xl}}&=&\frac{-\frac{1}{\sqrt{2l}}+\cO(l^{1/2})+\cO(t\xi'(t)l^{1/2})}{1+\cO(tl^{3/2})}\nonumber\\
&\leq&-\frac{1}{\sqrt{2l}}+\cO(tl)+\cO(\sqrt l)+\cO(t\xi'(t)\sqrt l)\nonumber\\
&<&-\frac{1}{\sqrt {2l}}+\cO(2\kappa\cdot\frac{4}{\kappa}\sqrt l)\nonumber\\
&<&-\frac{1}{2\sqrt {l}}<0,
%[(1-\xi)\hat{w}_{tyy}+\xi w^*_{tyy}+\xi'(t)(w^*_{yy}-\hat{w}_{yy})]\cdot[1+(1-\xi)\hat{w}_{Xy}+\xi w^*_{Xy}]\\
%& &-[(1-\xi)\hat{w}_{ty}+\xi w^*_{ty}+\xi'(t)(w^*_{y}-\hat{w}_y)]\cdot[(1-\xi)\hat{w}_{Xyy}+\xi w^*_{Xyy}]\\
%&=&[(1-\xi)\hat{w}_{tyy}+\xi w^*_{tyy}]\cdot[1+(1-\xi)\hat{w}_{Xy}+\xi w^*_{Xy}]-\\
%& &[(1-\xi)\hat{w}_{ty}+\xi w^*_{ty}]\cdot[(1-\xi)\hat{w}_{Xyy}+\xi w^*_{Xyy}]+\\
%& &\xi'(t)(w^*_{yy}-\hat{w}_{yy})[1+(1-\xi)\hat{w}_{Xy}+\xi w^*_{Xy}]-\\
%& &\xi'(t)(w^*_{y}-\hat{w}_y)[(1-\xi)\hat{w}_{Xyy}+\xi w^*_{Xyy}]\\
%&=&[(1-\xi)\hat{w}_{tyy}+\xi w^*_{tyy}]\cdot[1-\cO(\sqrt{y})]+\cO(1)\\
%& &+\xi'(t)(w^*_{yy}-\hat{w}_{yy})[1+(1-\xi)\hat{w}_{Xy}+\xi w^*_{Xy}]+\cO(1)\\
%&\geq&\min\{\hat{w}_{tyy},w^*_{tyy}\}-\max_{t_1\leq t\leq t_2}\|\xi'(t)\|\cdot\|w^*_{yy}-\hat{w}_{yy}\|+\cO(1)\\
%&=&\min\{\hat{H}_{yy},H^*_{yy}\}-\max_{t_1\leq t\leq t_2}\|\xi'(t)\|\cdot\int_{t_1}^{t_2}\|H^*_{yy}-\hat{H}_{yy}\|+\cO(1)\\
%&\geq&\min\{\hat{H}_{yy},H^*_{yy}\}-\max_{t_1\leq t\leq t_2}\|H^*_{yy}-\hat{H}_{yy}\|+\cO(1)\\
%&\geq&\frac{1}{4}y^{-1/2}+\cO(1),
\end{eqnarray}
as long as we take $t_2-t_1>\kappa/4$, $0<\kappa<1/5$ and then $\eps$ uniformly small. \\

In this way we remove the discontinuity at $\{t=\kappa\}$, and by the same argument we can remove the discontinuity at $\{t=1-\kappa\}$ and get a totally smooth Hamiltonian $\tilde{H}(x,l,t)$ which is periodic-1 of time $t\in[0,1]$ and $\tilde{H}_{ll}>0$ for $(x,t)\in\T^2$ and $0<l\ll1$. Moreover, from aforementioned argument and (\ref{smooth gene}), $\tilde{H}$ can be finally established by (\ref{formal}).

\section{Aubry Mather theory for the Billiard maps and applications}\label{3}
Let's first make a brief introduction of some elementary definitions and theorems. We concerns the following $C^2-$smooth Tonelli Lagrangian $L(x,v,t)$ with $(x,v,t)\in TM\times\T$, which satisfies these assumptions \cite{Ma2}:
\begin{itemize}
\item {\bf positively definite} the Hessian matrix $L_{vv}$ is positively definite for any $(x,v,t)\in TM\times\T$;
\item {\bf super linear} $L(x,v,t)/\|v\|\rightarrow+\infty$, as $\|v\|\rightarrow+\infty$ for any $(x,t)\in M\times\T$;
\item {\bf completeness} the Euler Lagrange equation of $L(x,v,t)$ is well defined for the whole time $t\in\R$;
\end{itemize}
Based on these, we can define the $\alpha(c): H^1(M,\R)\rightarrow\R$ by
\be\label{alpha}
\alpha(c)=-\inf_{\mu\in\mathfrak{M}_L}\int L-\eta\; d\mu, \quad[\eta]=c
\ee
where $\mathfrak{M}_L$ is the set of all the flow-invariant probability measures on $TM\times\T$. Also we can get its conjugated $\beta(h): H_1(M,\R)\rightarrow\R$ by
\be
\beta(h)=\inf_{\mu\in\mathfrak{M}_L, \rho(\mu)=h}\int L\;d\mu
\ee
as
\[
\langle [\lambda], \rho(\mu)\rangle=\int\lambda\;d\mu,\quad\forall \text{\;closed 1-form \;}\lambda \text{\;on\;} M.
\]
Due to the positive definiteness and super linearity, both of these two functions are convex and superlinear, and
\[
\langle c,h\rangle\leq\alpha(c)+\beta(h),\quad\forall c\in H^1(M,\R),\; h\in H_1(M,\R),
\]
where the equality holds only for $c\in D^+\beta(h)$ and $h\in D^+\alpha(c)$ (sub-derivative set). We denote by $\widetilde{\mathcal{M}}(c)\subset TM\times\T$ the closure of the union for all the supports of the minimizng measures of (\ref{alpha}), which is the so called {\bf Mather set}. Its projection to $M\times\T$ is the {\bf projected Mather set} $\mathcal{M}(c)$. 
From \cite{Ma2} we know that $\pi^{-1}\big{|}_{\mathcal{M}(c)}:M\times\mathbb{S}^1\rightarrow TM\times\mathbb{S}^1$ is a Lipschitz graph, where $\pi$ is the standard projection from $TM\times\T$ to $M\times\T$. 
\begin{rmk}
Actually, aformentioned definition can be applied for the set of closed probability measures $\mathfrak{M}_c$, instead of $\mathfrak{M}_L$. We can get a closed probability measure from a closed loop of $M$ due to the Birkhoff ergodic theorem:
\[
\int f d\mu_c:=\frac{1}{T_c}\int_0^{T_c}f(\gamma_c,\dot{\gamma}_c,t)\;dt,\quad\forall f\in C^{ac}(TM\times\T,\R)
\]
where $T_c$ is the periodic of the loop $\gamma_c$, and $\dot\gamma^-(T_c)\neq\dot\gamma^+(0)$ may be the case. This point is firstly proposed by Ma\~{n}\'e in \cite{Mn}, and we can still get the same $\alpha(c)$ and $\beta(h)$ with this neww setting.
\end{rmk}

Follow the setting of \cite{B}, we define
\begin{equation}
A_c(\gamma)\big{|}_{[t,t']}=\int_t^{t'}L(\gamma(t),\dot{\gamma}(t),t)-\langle\eta_c(\gamma(t)),\dot{\gamma}(t)\rangle dt+\alpha(c)(t'-t),
\end{equation}
\begin{equation}
h_c((x,t),(y,t'))=\inf_{\substack{\xi\in C^{ac}([t,t'],M)\\
\xi(t)=x\\
\xi(t')=y}}A_c(\xi)\big{|}_{[t,t']},
\end{equation}
where $t,t'\in\mathbb{R}$ with $t<t'$, and
\begin{equation}
F_c((x,\tau),(y,\tau'))=\inf_{\substack{\tau=t\mod1\\
\tau'=t'\mod1}}h_c((x,t),(y,t')),
\end{equation}
where $\tau,\tau'\in\mathbb{S}^1$. Then a curve $\gamma:\mathbb{R}\rightarrow M$ is called {\bf c-semi static} if 
\[
F_c((x,\tau),(y,\tau'))=A_c(\gamma)\big{|}_{[t,t']},
\]
for all $t,t'\in\mathbb{R}$ and $\tau=t\mod1$, $\tau'=t'\mod1$. A semi static curve $\gamma$ is called {\bf c-static} if
\[
A_c(\gamma)\big{|}_{[t,t']}+F_c((\gamma(t'),t'),(\gamma(t),t))=0,\quad\forall t,t'\in\mathbb{R}.
\]
The {\bf Ma\~{n}\'e set} $\widetilde{\mathcal{N}}(c)\subset TM\times\T$ is denoted by the set of all the c-semi static orbits, and the {\bf Aubry set} $\tilde{\mathcal{A}}(c)$ is the set of all the c-static orbits. From \cite{B} we can see that $\pi^{-1}:\mathcal{A}(c)\rightarrow \tilde{\mathcal{A}}(c)$ is also a Lipschitz graph. Beside, we know
\[
\widetilde{\mathcal{M}}(c)\subset\tilde{\mathcal{A}}(c)\subset\widetilde{\mathcal{N}}(c).
\]
\vspace{10pt}

Before we apply the Aubry Mather theory to (\ref{formal}), let's first expand the Hamiltonian to the whole cotangent space, i.e.
\be
H(x,l,t)=\frac{2\sqrt{2}}{3}|l|^{3/2}+|l|^{5/2}V(x,\sqrt{|l|}, t),\quad(x,l,t)\in T^*\T\times\T.
\ee
Correspondingly, the Lagrangian becomes symmetric as well:
\be
L(x,v,t)=\frac{|v|^3}{6}+|v|^5U(x,|v|,t),\quad (x,v,t)\in T\T\times\T.
\ee
But we should keep in mind that $\{x\in\T,l=0, t\in\T\}$ forms a rigid `wall' which separates the phase space $T^*\T\times\T$ into two disconnected parts, which are mirror images of each other. The reason we did so is to ensure the globally super-linear and positively definite, as we know $V(x,\sqrt{|l|}, t)=0$ for $|l|>\epsilon$ from the setting of Section \ref{2}. And the completeness is also followed because the velocity $v$ is dominated in a compact region.
\begin{rmk}
The only difference in our case is that $L_{vv}(x,0,t)=0$. Later you can see this point will cause a degeneracy of $\beta(h)$ at $\{h=0\}$.
\end{rmk}
\vspace{10pt}

\begin{lem}\label{lem}
Suppose $\Gamma_0$ and $\Gamma_1$ are two adjacent invariant curves with the rotational number by $w_0$, $w_1$, then for any $w\in(w_0,w_1)$, $\mathcal{N}(c)\bigcap\{t=0\}\varsubsetneqq\T$, where $c\in D^+\beta(w)$.
\end{lem}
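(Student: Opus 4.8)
The plan is to argue by contradiction: assuming $\mathcal{N}(c)\cap\{t=0\}=\T$, I will manufacture a rotational invariant curve of rotation number $w\in(w_0,w_1)$, contradicting the adjacency of $\Gamma_0$ and $\Gamma_1$. First I would set the stage in the language of Section \ref{3}. By Birkhoff's theorem the invariant curves $\Gamma_0,\Gamma_1$ of the time-$1$ map $\phi=\phi_H^1$ are Lipschitz graphs over $\T$, and the closed region $A$ they bound is a Birkhoff region of instability. Throughout $A$ the level $l$ stays uniformly positive, so the degeneracy $L_{vv}(x,0,t)=0$ at the wall $\{l=0\}$ never intervenes and the nondegenerate Tonelli/twist theory of \cite{Ma2,B} applies verbatim.

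Next I would localize the Mañé set. Since orbits cannot cross the invariant barriers $\Gamma_0,\Gamma_1$, the cylinder splits into three invariant regions (below $\Gamma_0$, inside $A$, above $\Gamma_1$), with the outer orbits having rotation number $\leq w_0$ or $\geq w_1$; hence for $w\in(w_0,w_1)$ and $c\in D^+\beta(w)$ the whole of $\widetilde{\mathcal{N}}(c)$ is confined to $A$, in particular to a compact range of momenta. Moreover every orbit of $\widetilde{\mathcal{N}}(c)$ is globally $c$-minimizing and carries the single rotation number $w$, because the non-static semistatic orbits are forward and backward asymptotic to the rotation-$w$ Mather set.

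The heart of the argument is then the crossing property of minimizers. Because all orbits of $\widetilde{\mathcal{N}}(c)$ share the rotation number $w$, Aubry's crossing lemma applies: two distinct such orbits can never meet at a common base point, since a shared base point at $t=0$ would force either coincidence, by uniqueness of the Euler-Lagrange flow, or a transverse crossing, which is forbidden for minimizers of equal rotation number. Hence $\widetilde{\mathcal{N}}(c)\cap\{t=0\}$ is a single-valued graph over $\mathcal{N}(c)\cap\{t=0\}$. If this base projection were all of $\T$, the graph, being closed and valued in the compact momentum range of the previous step, would be a continuous, hence Lipschitz, invariant curve; its rotation number is $w\in(w_0,w_1)$, contradicting the adjacency of $\Gamma_0$ and $\Gamma_1$. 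Therefore $\mathcal{N}(c)\cap\{t=0\}\subsetneq\T$.

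The step I expect to be the main obstacle is the crossing property in this degenerate setting, together with the promotion of the time-$0$ graph to a genuine invariant curve. Aubry's lemma is usually stated for a uniform twist, whereas here the twist $-\partial_{12}h_\varphi$ vanishes as $l\to0$, so I would have to invoke the uniform lower bound $\partial_2 f'\geq c'/\sqrt{l}$ established in Section \ref{2}, restricted to the compact annulus $A$ on which $l$ is bounded below, to recover a genuine monotone twist and the attendant non-crossing of minimizers. I would also need to verify that the closed single-valued graph obtained over $\T$ is continuous (closedness plus compactness of the fibre range), and that its invariance under $\phi$ makes it a bona fide rotational invariant curve, the very object excluded by adjacency.
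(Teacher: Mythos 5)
Your proposal shares the paper's overall skeleton: assume $\mathcal{N}(c)\cap\{t=0\}=\T$, upgrade the slice of $\widetilde{\mathcal{N}}(c)$ to a rotational invariant graph, and contradict the adjacency of $\Gamma_0$ and $\Gamma_1$. For irrational $w$ your route can be made to work and is essentially a hands-on substitute for what the paper cites: the paper invokes Bernard's theorem that unique ergodicity of the Mather set forces $\tilde{\mathcal{A}}(c)=\widetilde{\mathcal{N}}(c)$, and then uses the Lipschitz graph property of the Aubry set; your non-crossing argument, if carried out, would amount to reproving that coincidence. Your worry about the twist degeneracy at $l=0$ is a side issue, not the real difficulty.

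The genuine gap is the rational case, which your argument never isolates and where your key claim is false. ``Two minimizers of equal rotation number never cross'' is a statement about recurrent (Aubry/Mather) orbits; it fails for the heteroclinic members of the Ma\~{n}\'e set, and in general $\widetilde{\mathcal{N}}(c)$ has \emph{no} graph property. Concretely, for an autonomous symmetric two-maxima potential on $\T$ at the critical cohomology, the Ma\~{n}\'e set is the whole critical energy level: over each gap it is two-valued (an advancing and a retreating connection, which cross transversally in $(x,t)$), and its projection is all of $\T$ --- all orbits having the same rational rotation number. So a shared base point with distinct velocities does \emph{not} contradict minimality by any soft crossing lemma. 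What rescues the lemma, and what the paper cites from Mather's \emph{Variational construction of connecting orbits}, is the structure theorem for the flat $[c_1,c_2]=D^+\beta(p/q)$ of $\alpha$: for $c$ in the interior, $\widetilde{\mathcal{N}}(c)$ carries the single rotation symbol $p/q$, while at an endpoint it consists of the periodic minimizers together with connections of \emph{one} direction only ($p/q^-$ or $p/q^+$), and this one-sided family is totally ordered, hence a graph. One must also rule out the degenerate flat $c_1=c_2$ (where both directions would be semi-static simultaneously, as in the example above); this uses Mather's differentiability theorem, by which the flat collapses only if the $(p,q)$-minimizers already form an invariant circle --- impossible strictly between adjacent invariant curves. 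Without these inputs your crossing argument is unjustified exactly where the Ma\~{n}\'e set is richest, so you should either split into the irrational/rational cases as the paper does, or cite the order-preserving structure explicitly.
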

\begin{proof}
This is a direct corollary from \cite{Ma3}. As the configuration space $M=\T$, so $\alpha(c)$ is actually $C^1$ smooth. There would be a unique $w=\alpha'(c)$ corresponding to the rotational number of $\widetilde{\mathcal{M}}(c)$. If $w$ is irrational, then $\widetilde{\mathcal{M}}(c)$ contains only one uniquely ergodic measure, so $\tilde{\mathcal{A}}(c)=\widetilde{\mathcal{N}}(c)$ due to \cite{B}. Recall that $\tilde{\mathcal{A}}(c)$ has graph property, then ${\mathcal{N}}(c)\bigcap\{t=0\}\varsubsetneqq\T$. Otherwise, $\widetilde{\mathcal{N}}(c)$ forms an invariant curve which separates $\Gamma_0$ and $\Gamma_1$. This leads to a contradiction.

If $w=p/q$ is rational, there exists a maximal flat $[c_1,c_2]$ of $\alpha(c)$, such that $c\in[c_1,c_2]$ (it may happen that $[c_1,c_2]$ collapse to a single point $c$). If $c\in(c_1,c_2)$, $\widetilde{\mathcal{N}}(c)$ will has a unified rotational number $p/q$ (irreducible fraction); If $c=c_1$ (or $c_2$), then $\widetilde{\mathcal{N}}(c)$ contains $p/q$ periodic orbits and $p/q^-$ heteroclinic orbits (or $p/q^+$ heteroclinic orbits) \cite{Ma3}. This order-preserving structure prevents $\mathcal{N}(c)\bigcap\{t=0\}=\T$ just like before.\vspace{10pt}
 \end{proof}
{\it Proof of Corollary \ref{cor}\;} Now if we define $\Upsilon:=\{c\in H^1(\T,\R)|c\in D^+\beta(w), w\in(w_0,w_1)\}$, then $\Upsilon$ is actually an open interval $(c_-,c_+)$ due to the strictly convex of $\beta(h)$. Moreover, $\tilde{\mathcal{A}}(c_-)\subset\Gamma_0$ and $\tilde{\mathcal{A}}(c_+)\subset\Gamma_1$. So we can claim the following conclusion:
\begin{lem}
$\forall c_1$, $c_2\in\Upsilon$, they are $C-$equivalent in the following sense: there exists a continuous curve $c(s):[0,1]\rightarrow H^1(\T,\R)$ with $c(0)=c_1$ and $c(1)=c_2$, such that $\forall s_0\in[0,1]$, there exists $\delta>0$ which ensures
\[
\langle c(s)-c(s_0), V(\mathcal{N}_0(c(s_0)))=0, \quad\forall s\in[s_0-\delta,s_0+\delta],
\]
with
\[
V(\mathcal{N}_0(c(s_0)):=\bigcap\big{\{}i_{U*}H_1(U,\R)\subset H_1(\T,\R)|U \text{is an open neighborhood of }\mathcal{N}_0(c(s_0))\big{\}}
\]
and
\[
\mathcal{N}_0(c(s_0)):=\mathcal{N}(c(s_0))\bigcap\{t=0\}.
\]
\end{lem}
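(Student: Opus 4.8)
The plan is to exploit the one-dimensionality of the base $M=\T$: I claim that the homological term $V(\mathcal{N}_0(c(s_0)))$ is always the trivial subspace $\{0\}$, after which the $C$-equivalence of any $c_1,c_2\in\Upsilon$ reduces to joining them by an essentially arbitrary path inside $\Upsilon$. Since $\beta$ is strictly convex, $\Upsilon=(c_-,c_+)$ is an open interval of $H^1(\T,\R)\cong\R$ and hence convex, so I would simply take the straight segment $c(s)=(1-s)c_1+sc_2$, $s\in[0,1]$; it satisfies $c(0)=c_1$, $c(1)=c_2$ and stays in $\Upsilon$.

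The key step is the vanishing of $V$. For fixed $s_0\in[0,1]$ we have $c(s_0)\in\Upsilon$, so Lemma \ref{lem} gives $\mathcal{N}_0(c(s_0))=\mathcal{N}(c(s_0))\cap\{t=0\}\subsetneq\T$. This set is closed, being the intersection of the compact projected Ma\~n\'e set with the closed slice $\{t=0\}$, and by Lemma \ref{lem} it is a proper subset of $\T$, so its complement contains some point $p$. Then $U:=\T\setminus\{p\}$ is an open neighborhood of $\mathcal{N}_0(c(s_0))$ homeomorphic to an open interval; it is therefore contractible, $H_1(U,\R)=0$, and consequently $i_{U*}H_1(U,\R)=\{0\}$. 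Since $V(\mathcal{N}_0(c(s_0)))$ is defined as the intersection over \emph{all} open neighborhoods, it is contained in this single contribution $\{0\}$, whence $V(\mathcal{N}_0(c(s_0)))=\{0\}$ for every $s_0$.

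With $V\equiv\{0\}$ the pairing $\langle c(s)-c(s_0),\,V(\mathcal{N}_0(c(s_0)))\rangle=0$ holds automatically for every $s$, so any $\delta>0$ (say $\delta=1$) satisfies the requirement, and the straight segment witnesses the $C$-equivalence of $c_1$ and $c_2$. There is no genuine obstacle beyond the topological observation that a proper closed subset of the circle carries no first homology; the entire analytic content sits in Lemma \ref{lem}, which supplies the properness of $\mathcal{N}_0(c(s_0))$ and is assumed here. This is precisely the ingredient that Mather's and Bernard's forcing theory converts into semi-static connecting orbits between the Aubry sets of $c_-$ and $c_+$, thereby yielding Corollary \ref{cor}.
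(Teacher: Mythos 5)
Your proposal is correct and takes essentially the same approach as the paper: both arguments rest on Lemma \ref{lem} to ensure $\mathcal{N}_0(c(s_0))\varsubsetneqq\T$ and then use the fact that a proper closed subset of the circle has an open neighborhood carrying no first homology --- you puncture $\T$ at one point, while the paper covers $\mathcal{N}_0(c(s_0))$ by finitely many disjoint open intervals $\{U_i\}_{i=1}^k$. The only substantive difference is packaging: rather than concluding $V(\mathcal{N}_0(c(s_0)))=\{0\}$ outright, the paper exhibits a representative closed 1-form $\xi$ of the class $c(s)-c(s_0)$ with support disjoint from that neighborhood, which is the ingredient actually fed into the Cheng--Yan connecting lemma afterwards; your version proves the lemma as stated, but one would still need to construct such a supported-away form to carry out the proof of Corollary \ref{cor}.
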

This Lemma is trivial because $\widetilde{\mathcal{N}}(c)$ is closed and due to Lemma \ref{lem}, we can always cover ${\mathcal{N}}_0(c)$ with finitely many disconnected open intervals $\{U_i=(a_i,b_i)\subset\T\}_{i=1}^k$. So we can find $[\xi(x)]=c(s)-c(s_0)$ such that supp$\xi(x)\bigcap(\bigcup_{i=1}^kU_i)=\emptyset$. The following variational principle will help us to construct a connecting orbit which passes by $\widetilde{\mathcal{A}}(c(s))$ and $\widetilde{\mathcal{A}}(c(s_0))$.
\begin{lem}\cite{CY1, CY2}
For the modified Lagrangian
\[
L_{\eta,\mu,\rho}(x,v,t):=L(x,v,t)-\langle\eta(x)+\rho(t)\mu(x),v\rangle,\quad(x,v,t)\in TM\times\R,
\]
where $[\eta]=c(s_0)$, $[\mu]=c(s)-c(s_0)$ and $\rho(t):\R\rightarrow\R$ is a smooth transitional function with $\rho(t)=1$ for $t\in[\delta,\infty)$ and $\rho(t)=0$ for $t\in(-\infty,0]$, we can define an action function by
\be
h_{\eta,\mu,\rho}^{T_0,T_1}(m,m')=\inf_{\substack{\gamma(-T_0)=m\\ \gamma(T_1)=m'}}\int_{-T_0}^{T_1}L_{\eta,\mu,\rho}(\gamma(t),\dot{\gamma}(t),t)dt+T_0\alpha(c(s_0))+T_1\alpha(c(s))
\ee
for integers $T_0$, $T_1$ and
\be\label{pseudo}
h_{\eta,\mu,\rho}^\infty(m,m'):=\liminf_{T_0,T_1\rightarrow\infty}h_{\eta,\mu,\rho}^{T_0,T_1}(m,m'),\quad m,m'\in M.
\ee
Let $\{T_0^i\}_{i\in\mathbb{N}}$ and $\{T_1^i\}_{i\in\mathbb{N}}$ be the sequence of positive numbers such that $T_j^i\rightarrow\infty$ $(j=0,1)$ as $i\rightarrow\infty$ and satisfies
\[
\lim_{i\rightarrow\infty}h_{\eta,\mu,\rho}^{T_0^i,T_1^i}(m,m')=h_{\eta,\mu,\rho}^{\infty}(m,m'),
\]
we can find $\gamma_i(t,m,m'):[-T_0^i,T_1^i]\rightarrow M$ being the minimizer connecting $m_0$ and $m_1$ such that
\[
h_{\eta,\mu,\rho}^{T_0^i,T_1^i}(m_0,m_1)=\int_{-T_0^i}^{T_1^i}L_{\eta,\mu,\rho}(\gamma_i(t),\dot{\gamma}_i(t),t)dt+T_0^i\alpha(c(s_0))+T_1^i\alpha(c(s)).
\]
Then the set $\{\gamma_i\}$ is pre-compact in $C^1(\mathbb{R},M)$. Let $\gamma:\mathbb{R}\rightarrow M$ be an accumulation orbit of $\{\gamma_i\}$, then $\forall s,\tau\geq1$
\be\label{pseudo curve}
\int _{-s}^\tau L_{\eta,\mu,\rho}(d{\gamma},t) dt&=&\inf_{\substack{s_1-s\in\mathbb{Z},\tau_1-\tau\in\mathbb{Z}\nonumber\\
s_1,\tau_1\geq1\\
\gamma^*(-s_1)=\gamma(-s)\\
\gamma^*(\tau_1)=\gamma(\tau)}}\int_{-s_1}^{\tau_1}L_{\eta,\mu,\rho}(d\gamma^*(t),t)dt\\
& &+(s_1-s)\alpha(c(s_0)) +(\tau_1-\tau)\alpha(c(s)),
\ee
If we denote by $\mathscr{C}_{\eta,\mu,\rho}$ the set of all the minimizers of (\ref{pseudo}), then any orbit $\gamma(t):\R\rightarrow M$ in it conforms to the Euler Lagrange equation 
\be\label{e-l}
\frac{d}{dt}L_v(\gamma(t),\dot{\gamma}(t),t)=L_x(\gamma(t),\dot{\gamma}(t),t),\quad\forall t\in\R,
\ee
as long as we take $s$ sufficiently close to $s_0$. Moreover, $\gamma$ is a heteroclinic orbit connecting $\mathcal{A}(c(s_0))$ and $\mathcal{A}(c(s))$. 
\end{lem}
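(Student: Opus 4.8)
\emph{Proof proposal.} The plan is to run the variational machinery of Mather, Fathi and Cheng--Yan in three stages: secure finiteness of $h^\infty_{\eta,\mu,\rho}$ together with uniform a priori bounds, extract a $C^1_{\mathrm{loc}}$ limit of the finite-time minimizers, and identify this limit as a genuine Euler--Lagrange trajectory heteroclinic between the two Aubry sets. First I would show that $h^\infty_{\eta,\mu,\rho}(m,m')$ is finite, by splitting the action at the transition window $[0,\delta]$. On $(-\infty,0]$ the modified Lagrangian is the pure $c(s_0)$-system and on $[\delta,+\infty)$ the pure $c(s)$-system, and for each outer piece the $c$-action together with its $\alpha(c)$-correction is bounded below uniformly in the length of the time interval, which is precisely the boundedness built into the definition (\ref{alpha}) of $\alpha$; the middle piece over $[0,\delta]$ contributes only a bounded amount. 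This yields a uniform lower bound for $h^{T_0,T_1}_{\eta,\mu,\rho}$, and the matching upper bound comes from an explicit competitor obtained by concatenating a semi-static curve of the $c(s_0)$-system on $(-\infty,0]$ landing near $\tilde{\mathcal{A}}(c(s_0))$, a bounded connector on $[0,\delta]$, and a semi-static curve of the $c(s)$-system on $[\delta,+\infty)$ issuing from $\tilde{\mathcal{A}}(c(s))$.

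For compactness, each $\gamma_i$ is a Tonelli minimizer on $[-T_0^i,T_1^i]$, hence a $C^1$ extremal, and the uniform action bound forces a uniform velocity bound through the a priori compactness lemma for Tonelli minimizers: superlinearity rules out large velocities on any unit time interval, since the constant-speed competitor would otherwise lower the action. This places $(\gamma_i,\dot\gamma_i)$ in a fixed compact subset of $T\T$, so Arzel\`a--Ascoli produces a subsequence converging in $C^1_{\mathrm{loc}}(\R,\T)$ to a curve $\gamma$ defined on all of $\R$ because $T^i_j\to\infty$. In the present billiard setting one must further verify that the minimizers stay in the region $\{L_{vv}>0\}$, i.e. that their speeds remain bounded away from the degenerate wall $\{v=0\}$. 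This follows from the order-preserving structure of Aubry--Mather minimizers: orbits with rotation number in $(w_0,w_1)$ are trapped in the invariant annulus bounded by the caustics $\Gamma_0$ and $\Gamma_1$, both of which sit at $l$ bounded away from $0$, so $\{l=0\}$ is never reached and the Tonelli estimates stay valid.

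The limit is then identified in the usual way. A lower-semicontinuity and cut-and-paste argument gives the global minimality (\ref{pseudo curve}): any competitor agreeing with $\gamma$ at its endpoints, up to integer time shifts, which leave the two outer Lagrangians invariant by $1$-periodicity in $t$, can be spliced into $\gamma_i$, and the minimality of $\gamma_i$ passes to the limit. Being a local minimizer, $\gamma$ is an extremal of $L_{\eta,\mu,\rho}$, whose Euler--Lagrange equation differs from (\ref{e-l}) only by the term $\rho'(t)\mu(\gamma(t))$, supported on $[0,\delta]$. Taking $s$ close to $s_0$ makes $[\mu]=c(s)-c(s_0)$ small and keeps $\gamma$ near the $c(s_0)$-Ma\~{n}\'e set throughout the transition; since $\mathrm{supp}\,\mu$ was chosen disjoint from that set, $\mu(\gamma(t))=0$ on $[0,\delta]$ and (\ref{e-l}) holds for all $t$. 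Finally, the single-cohomology minimality of the two halves makes $\gamma|_{(-\infty,0]}$ semi-static for $c(s_0)$ and $\gamma|_{[\delta,+\infty)}$ semi-static for $c(s)$, so its $\alpha$-limit lies in $\tilde{\mathcal{A}}(c(s_0))$ and its $\omega$-limit in $\tilde{\mathcal{A}}(c(s))$, exhibiting $\gamma$ as the desired heteroclinic orbit.

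I expect the compactness step, and in particular the confinement of the finite-time minimizers, to be the main obstacle. The difficulty is twofold: one must prevent velocities from escaping uniformly as $T_0^i,T_1^i\to\infty$ over an unbounded time domain, and simultaneously prevent the orbits from collapsing onto the wall $\{l=0\}$, where $L_{vv}=0$ and the standard Tonelli estimates break down. Controlling both rests on quantitative use of the gap $w_0<w_1$ together with the order-preserving barrier supplied by Lemma \ref{lem}; once the orbits are trapped in a compact annular region bounded away from the wall, the remaining lower-semicontinuity and semi-static limit arguments are routine.
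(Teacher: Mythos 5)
Your proposal follows essentially the same route as the paper's own (very brief) proof, which simply defers to Cheng--Yan: superlinearity gives pre-compactness of the finite-time minimizers, the set of accumulation orbits $\mathscr{C}_{\eta,\mu,\rho}$ varies upper semi-continuously in $(\mu,\rho)$ with $\mathscr{C}_{\eta,0,0}=\mathcal{N}(c(s_0))$ (this is the rigorous form of your ``keeps $\gamma$ near the $c(s_0)$-Ma\~{n}\'e set'' step), and since $\mathrm{supp}\,\mu$ is disjoint from the neighborhoods covering $\mathcal{N}_0(c(s_0))$ the limit orbit obeys the unmodified Euler--Lagrange equation and is heteroclinic between the two Aubry sets. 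Your additional attention to the degenerate wall $\{v=0\}$, where $L_{vv}=0$, is a point the paper glosses over (it even asserts positive definiteness outright), and your confinement-between-caustics argument is a reasonable way to close that gap, though strictly speaking it applies to the limit orbit rather than to the finite-time minimizers, for which superlinearity alone already suffices for compactness.
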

\begin{proof}
This is a direct citation of conclusions in \cite{CY1,CY2}, so here we just give a sketch of the strategy: Recall that $L_{\eta,\mu,\rho}$ is also superlinear and positively definite, because we just add a linear term of $v$ to the original $L(x,v,t)$. So we can get the compactness of $\{\gamma_i\}$ from the superlinearity. In turns we get the existence of accumulation orbits and $\mathscr{C}_{\eta,\mu,\rho}\neq\emptyset$. Moreover, $\mathscr{C}_{\eta,\mu,\rho}$ is a upper semi-continuous set valued function of the additional term $\{\mu,\rho\}$, which is due to the pre-compactness as well. Recall that $\mathcal{N}_0(c(s_0))\subset\bigcup_{i=1}^kU_i$ and supp$\mu(x)\bigcap(\bigcup_{i=1}^kU_i)=\emptyset$, there must exist a small time $0<\delta<1$
such that $\mathcal{N}(c(s_0))\bigcap\{t\in[0,\delta]\}\subset\bigcup_{i=1}^kU_i$, then we use the upper semi-continuity of $\mathscr{C}_{\eta,\mu,\rho}$, for sufficiently small $|c(s)-c(s_0)|$, $\mathscr{C}_{\eta,\mu,\rho}\bigcap\{t\in[0,\delta]\}\subset\bigcup_{i=1}^kU_i$ as well. This is because $\mathscr{C}_{\eta,0,0}=\mathcal{N}(c(s_0))$. On the other side, only for $t\in[0,\delta]$ we have $L_{\eta,\mu,\rho}\neq L$, whereas $\forall\gamma\in\mathscr{C}_{\eta,\mu,\rho}$, supp$\mu(x)\bigcap(\bigcup_{i=1}^kU_i)=\emptyset$ makes $\gamma|_{[0,\delta]}$ conforms to the same Euler Lagrange equation as (\ref{e-l}).
\end{proof}
\begin{rmk}
This locally connecting mechanism was initially used to solve the {\it a priori} unstable Arnold diffusion problems. Similar skill was also revealed independently in \cite{B}.

Corollary \ref{cor} actually tells us that, in geometrical optical meaning, adjacent caustics will form a closed, but instable region (so called `caustic tube' in \cite{L}).
\end{rmk}

At last, Iet's make a comment about the shape of $\beta(h)$ function at $\{h=0\}$. Indeed we can see that the positive definiteness of $L(x,v,t)$ collpases at the level set $\{v=0\}$, so the $\beta(h)$ function have a quite different shape from the one of general mechanical systems. For $c\in H^1(\T,\R)$ sufficiently close to 0,
\be
\alpha(c)&=&-\inf_{\mu\in\mathfrak{M}_L}\int L-c\; d\mu\nonumber\\
&=&\sup_{\mu\in\mathfrak{M}_c}\int c-L\; d\mu\nonumber\\
&\geq&\frac{1}{T}\int_0^{T} \frac{2\sqrt 2}{3}c^{3/2}+\cO(c^{5/2})\; dt\quad\text{by taking $v=\sqrt{2c_n}$}\nonumber\\
&=&\frac{2\sqrt 2}{3}c^{3/2}+\cO(c^{5/2})\nonumber.
\ee
On the other side, for $h\in H_1(\T,\R)$ sufficiently close to 0,
\be
\beta(h)&=&\max_c\{\langle c,h\rangle-\alpha(c)\}\nonumber\\
&\leq&\max_c\{\langle c,h\rangle-\frac{2\sqrt2}{3}c^{3/2}+\cO(c^{5/2})\}\nonumber\\
&\leq&\frac{h^3}{6}+\cO(h^5)\nonumber.
\ee
It's trivial that $\beta(h)\geq0$, so we get the degeneracy of $\beta(h)$ at zero by
\[
\lim_{h\rightarrow0}\frac{\beta(h)}{h^{2+\varpi}}=0,\quad \forall\varpi\in(0,1).
\]
\textr{Because $\alpha(c)$ is $C^1$ smooth, so $\alpha'(0)=h=0$. Then we use the continuity of $\alpha'(c)$ get that as $c\rightarrow0$, $h_c\rightarrow0$ as well.}
\begin{figure}
\begin{center}
\includegraphics[width=12cm]{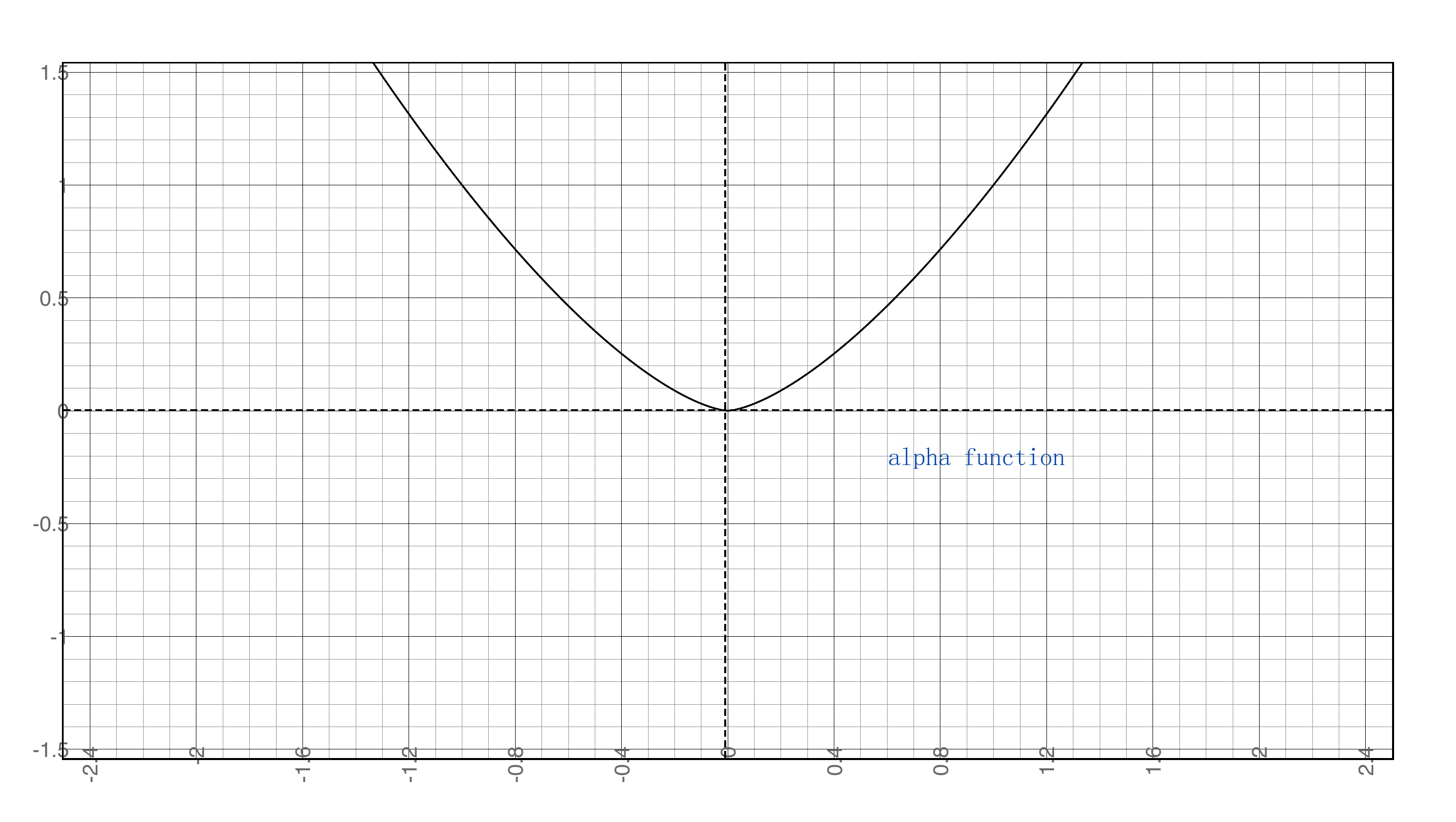}
\caption{ }
\label{fig4}
\end{center}
\end{figure}
\begin{ex}
For the circular billiard map, we can get
\[
\alpha(c)=4\rho c\arcsin\sqrt{\frac{c}{8\rho^2}}-16\rho^3\arcsin\sqrt{\frac{c}{8\rho^2}}+2\rho\sqrt{8\rho^2c-c^2}
\]
(see figure \ref{fig4}), then
 \[
 \lim_{h\rightarrow0}\beta(h)/h^3=1/6
 \]
where $\rho$ is a constant.
\end{ex}
\section*{Acknowledgement}
The author thanks Prof. Kostya Khanin for offering the postdoc position in the University of Toronto, in which this work is finished. I also warmly thank Prof. Vadim Kaloshin and Prof. Ke Zhang for communications on the convex billiard problems and many useful suggestions.


\begin{thebibliography}{9999}
\bibitem{B} Bernard P., {\it Connecting orbits of time dependent Lagrangian systems}, Ann. Inst. Fourier, Grenoble 52, 5 (2002), 1533-1568
\bibitem{CY1} Cheng C-Q.\& Yan J., {\it Existence of diffusion orbits in a priori unstable Hamiltonian systems}, J. Differential Geometry , 67 (2004) 457-517.
\bibitem{CY2} Cheng C-Q.\& Yan J., {\it Arnold diffusion in Hamiltonian Systems: a priori unstable case}, J. Differential Geometry, 82 (2009) 229-277.
\bibitem{D} Douady R., {\it Une d\'emonstration directe de l\'e quivalence des th\'eor\`emes de tores invariants pour diff\'eomorphismes et champs de vecteurs}, C. R. Acad. Sci. Paris S\'er. I Math. 295 (1982), No.2, 201-204
\bibitem{G} Gol\'e C., {\it Symplectic Twist Maps: Global Variational Techniques }, Advanced Series in Nonlinear Dynamics, World Scientific Pub Co Inc (November 2001)
\bibitem{L} Lazutkin V., {\it The existence of caustics for a billiard problem in a convex domain}, Izv. Akad. Nauk SSSR, SerMat. Tom 37 (1973), No.1, 185-214
\bibitem{Mn} Ma\~{n}\'e R., {\it Generic properties and problems of minimizing measures of Lagrangian systems}, Nonlinearity, Volume 9, Number 2, 1996, 623-638.
\bibitem{Ma} Mather J., {\it Glancing billiards}, Ergod. Th. Dyn. Sys., 2:397-403, 1982.
\bibitem{Ma2} Mather J., {\it Action minimizing invariant measures for positive definite Lagrangian systems}, Mathematische Zeitschrift, May 1991, Volume 207, Issue 1, 169-207
\bibitem{Ma3} Mather J., {\it Variational construction of connecting orbits}, Annales de l'institut Fourier, tome 43, No. 5 (1993), p. 1349-1386.
\bibitem{M} Moser J., {\it Monotone twist Mappings and the Calculs of Variations}, Ergodic Theory and Dyn. Syst., 6 (1986), 401-413.
\bibitem{M2} Moser J.,{\it Selected chapters in the calculus of variations}, Lectures in Mathematics. ETH Z\"urich, Birkh\"auser, 2003 edition.
\end{thebibliography}
\end{document}